\documentclass[reqno,a4paper,12pt]{amsart}
\usepackage{amssymb,amsmath,amscd,amstext,amsthm,amsfonts,mathtools,tikz}
\usepackage{graphicx}
\usepackage{setspace} 
\usepackage{subfig}
\usepackage[mathscr]{eucal}
\usepackage[ansinew]{inputenc} 
\usepackage{enumerate}

\usepackage{geometry}
\usepackage{amsrefs}
\usepackage{bbm}
\usepackage{float}
\usepackage{caption}
\usepackage{rotating}
\usepackage{fdsymbol}
\usepackage{mathdots}
\usetikzlibrary{matrix}
\usepackage{stmaryrd}
\usepackage{multirow}
%%%%%%%%%%%%%%%%%%%%%%%%%%%%%%%%%%%%%%%%%%%%%%%%%%%%%%%%%%

\numberwithin{equation}{section}
\newtheorem{theorem}{Theorem}[section]

\newtheorem{corollary}[theorem]{Corollary}
\newtheorem{lemma}[theorem]{Lemma}

{\theoremstyle{definition}
{\newtheorem{remark}[theorem]{Remark}
\newtheorem{example}[theorem]{Example}

\newtheorem{defn}[theorem]{Definition}
}}

%%%%%%%%%%%%%%%%%%%%%%%%%%%%%%%%%%%%%%%%%%%%%%%%%%%%

\newcommand{\Cc}{{\mathcal{C}}}
\newcommand{\Rr}{{\mathbb{R}}}
\def\diag{\operatorname{diag}}

\renewcommand{\d}{\mathrm d}

\newtheorem*{theorem*}{Theorem}

\newcommand{\B}{{\bf B}}
\newcommand{\D}{{\bf D}}
\newcommand{\W}{{\bf W}}

\newcommand{\Bt}{{\bf B}^t}
\newcommand{\Dt}{{\bf D}^t}

\newcommand{\0}{{\bf 0}}
\newcommand{\J}{\mathcal{J}}
\newcommand{\Dd}{\mathcal{D}}
\newcommand\scalemath[2]{\scalebox{#1}{\mbox{\ensuremath{\displaystyle #2}}}}
\newcounter{lst}
\newenvironment{lst}{%
\refstepcounter{lst}%
\begin{center}
\begin{minipage}{.9\textwidth}}{%
\end{minipage}%
\makebox[.1\textwidth][r]{(\thelst)}%
\end{center}}
%%%%%%%%%%%%%%%%%%%%%%%%%%%%%%%%%%%%%%%%%%%%%%%%%%%%%
%\setlist[itemize]{leftmargin=*}
%%%%%%%%%%%%%%%%%%%%%%%%%%%%%%%%%%%%%%%%%%%%%%%%%%%%%
\begin{document}
\title[The Hamiltonian formalism of the inverse problem]{The Hamiltonian formalism of the inverse problem}
\date{\today}
\keywords{Hamiltonian Inverse Problem, Dirac Structure, Big-isotropic Structure, Linear Systems, Completely Integrable}

%2010 Classification: 34A55: Inverse problems, 34A30: Linear equations and systems, general, 70H06: Completely integrable systems and methods of integration,70H33: Symmetries and conservation laws, reverse symmetries, invariant manifolds and their bifurcations, reduction

\author[H. N. Alishah]{Hassan Najafi Alishah}
\address{Departamento de Matem\'atica, Instituto de Ci\^encias Exatas\\
Universidade Federal de Minas Gerais \\
Belo Horizonte, 31270-901, Brazil}
\email{halishah@mat.ufmg.br}

\begin{abstract}
A big-isotropic structure is a generalization of the notion of Dirac structure, due to Vaisman. We discuss the inverse problem of deciding if a vector field is Hamiltonian having a big-isotropic structure as underlying geometry. In \cite{hassan-2020-2} we have considered this question for the special case of replicator equations. Here we generalize that approach to any vector field that can be written in the form $X(u)=(\B\eta)(u),$ where $\B=\B(u)$ is a matrix and  $\eta=\eta(u)$ is a vector. For a linear system we show that, if the representing matrix of the system has at least one pair of positive-negative non-zero eigenvalues, or a zero eigenvalue with at least one 3-dimensional Jordan block associated to it, then the linear system has a Hamiltonian description with respect to a big-isotropic structure. As a byproduct, we find a class of linear systems with zero eigenvalue that are Hamiltonian with respect to a big-isotropic structure but not a Dirac structure. Moreover, we prove that every linear Hamiltonian system, having a big-isotropic structure as underlying geometry, is completely integrable in the sense of Zung \cite{zung-action-angle}. For linear systems, in the both Hamiltonian formulation and complete integrability cases, explicit descriptions of the geometric structures, the Hamiltonian functions, the first integrals and the commuting flows are provided. 
\end{abstract}

\maketitle
%\tableofcontents

\section{Introduction}\label{introduction}
Given equations of motion, say $\ddot{u}=F(u,\dot{u})$, the inverse problem in the Lagrangian formalism aims to find a Lagrangian function $\mathcal{L}=\mathcal{L}(u,\dot{u})$ such that the given equations of motion are the Euler-Lagrange equations of $\mathcal{L}$. In the Hamiltonian version of this problem, in addition to finding a Hamiltonian function, one has to determine the underlying geometric structure as well. If the Lagrangian function is regular it gives rise, through the Legendre transformation, to a Hamiltonian description of the given equations of motion on the cotangent bundle, where the underlying geometric structure is the the canonical symplectic structure of the cotangent bundle. However, for many interesting problems the Lagrangian is not regular and the Hamiltonian description cannot be obtained from the Lagrangian description. For this reason the Hamiltonian inverse problem should be considered independently -- see \cite[Introduction]{MR1233228} for more on this matter.  

The Hamiltonian formalism of the inverse problem has been studied in the context of symplectic and Poisson geometries, see e.g. \cite{MR1233228,geo-from-dyn}. Here, we consider as underlying geometry for our Hamiltonian descriptions Dirac structures and their generalizations, the so-called big-isotropic structures (see Section~\ref{dirac-bigisotropic} for background). It includes as special cases symplectic, presymplectic and Poisson structures. We only consider here the local problem, so we work on $\mathbb{R}^m$. The global aspects of the problem will be addressed in future work. 

In a recent paper \cite{hassan-2020-2} we have studied this question for replicator equations (or, equivalently, Lotka-Volterra equations) where the pay-off matrix is not skew-symmetrizable. By using big-isotropic structures we were able to enlarge the set of conservative replicator and Lotka-Volterra equations that admit a Hamiltonian formulation. Here, we extend this approach.

In order to explain our main idea, let $X=\B\eta$ be a vector field in $\mathbb{R}^m$ that can be factored as a product of a matrix $\B=\B(u)$ and a vector $\eta=\eta(u)$. We will be looking for a matrix valued function $\D=\D(u)$ such that
\begin{enumerate}[(1)]
\item the $1$-form $\Dt\eta$ is closed,
\item the matrix $\D\B(u)$ is skew-symmetric for every $u\in\mathbb{R}^m$, 
\item an integrability condition is satisfied, (condition (ii) of Lemma~\ref{lemma:our-dirac-structure}).
\end{enumerate}
 Items $(1)$ and $(2)$ lead to first integrals of the motion for $X$ which are the candidates for the Hamiltonian we need. Furthermore, in Lemma~\ref{lemma:our-dirac-structure} we show that when items $(2)$ and $(3)$ are satisfied, the pair $(\B,\D^t)$ generates a  big-isotropic structure, which is Dirac if and only if $\ker\B\cap\ker\D^t=0$. Corollary~\ref{non-singular-cases} provides the criteria to further determine if the structure generated by $(\B,\D^t)$ is symplectic, presymplectic or Poisson.  Our Theorem~\ref{main-result} states that if all the three items are satisfied, then the vector field $X$ is Hamiltonian with the Hamiltonian function $H$ defined by $dH=\Dt\eta$ and the underlying big-isotropic structure generated by the pair $(\B,\D^t)$. In the case of replicator equations and linear systems, the problem reduces to finding a particular type of constant matrices $D$ such that $DB$ is skew-symmetric. We believe that our approach can be applied to other problems as well, beyond the replicator equations treated in \cite{hassan-2020-2}.

Linear Hamiltonian systems on $\mathbb{R}^{2k}$ relative to the canonical symplectic form were first studied and classified by Williamson \cite{Williamson}. One of the main steps in his approach was to transform the linear system $X=Bu$ into simpler form, diagonal or  Jordan normal form, preserving the symplectic structure, i.e., using canonical changes of variables. The factorization approach presented in \cite{MR1233228} and discussed in more detail in the textbook~\cite{geo-from-dyn}, overcomes this problem. In our approach the matrix $B$ can simply be considered to be in Jordan canonical form or any other member of its conjugacy class -- see Remark~\ref{normalizing-to-jordan}. 

The factorization of \cite{MR1233228,geo-from-dyn} amounts to finding a skew-symmetric matrix $\Lambda$ and a symmetric matrix $H$ such that $B=\Lambda H$. The matrix $\Lambda$ represents a Poisson structure while $H$ yields a quadratic Hamiltonian.  When $B$ is non-singular $\Lambda$ becomes a symplectic structure, and for this it is necessary to have non-zero positive-negative eigenvalues in pairs with the same Jordan blocks, see~\cite[Theorem 4.2]{geo-from-dyn}. This shows that the scope of applicability of this approach is limited. For example, it does not detect if the linear system is Hamiltonian w.r.t.\footnote{with respect to} a presymplectic structure, where zero eigenvalues can be present and the size of Jordan blocks can be different. Our approach removes these restrictions and is able to detect possible symplectic, presymplectic, Poisson and Dirac structures. Even more general, it is able to detect possible big-isotropic structures. We we will see, for example, that a linear system that contains a single even dimensional Jordan block associated to a zero eigenvalue has a Hamiltonian description w.r.t. a big-isotropic structure but not a Dirac structure -- see Corollary~\ref{cor:only-proper-big-iso}. Furthermore, Casimirs and isotropic vector fields can be detected -- see Examples~\ref{example:zero-eigenvalue} and~\ref{example:nonzero-eigenvalue}. A given linear system can have more than one Hamiltonian description and another advantage of our approach is that the reason for this lack of uniqueness becomes very clear -- see Remark~\ref{remark-main-results-linear}. 

Finally, Zung in \cite{zung-action-angle} has introduced a notion of complete integrability for arbitrary vector fields, Hamiltonian or not. We will show that a linear Hamiltonian system, in the big-isotropic sense, is always completely integrable in the sense of Zung. This extends a similar result obtained in \cite{MR1233228} for a more restricted class of systems. 

{\bf Organization of the paper.} 
In Section~\ref{dirac-bigisotropic}, we provide a brief introduction to various geometric structures, such as symplectic, Poisson and Dirac structures, and how they fit in the general set up of big-isotropic structures on $\mathbb{R}^m$. In Section~\ref{hamiltonian-inverse-problem}, we state and prove our results on the Hamiltonian formalism of the inverse problem for general systems. In Section~\ref{sec:linear-vector-fields}, we discuss the case of linear Hamiltonian systems. In Section~\ref{sec:integrability}, we start by providing a brief introduction to integrable systems and then we show that every linear Hamiltonian system is completely integrable in the sense of Zung.

 %%%%%%%%%%%%%%%%%%%%%%%%%%%%%%%%%%%%%%%%%%%%%%%%%%

\section{Dirac and big-isotropic structures} 
\label{dirac-bigisotropic}
In this section, we provide a brief introduction to big isotropic structures. We start with the geometric structures which are special cases of big-isotropic structure. 

{\bf Symplectic structure:} A non-degenerate closed $2$-form $\omega$ on a manifold $M$ is called a \emph{symplectic structure.} Non-degenerate refers to the fact that the vector bundle map $\omega^\sharp:TM\to T^\ast M$ defined by $X\mapsto \omega(X,.)$ is non-degenerate. This condition forces $M$ to be even dimensional. Relaxing the non-degeneracy condition on $\omega$, the closed $2$-form defines a \emph{presymplectic structure} on $M$. A presymplectic manifold doesn't need to be even dimensional. The map $\omega^\sharp$ is used to define the Hamiltonian vector filed $X_H$ of a given function $H$ by $\omega^\sharp(X_H)=d H$. We will use the same notation $\omega^\sharp$ for the representing matrix, in local coordinates, of the vector bundle map $\omega^\sharp$. 

{\bf Poisson structure:} Let $\pi$ be a bivector on $M$ i.e., a bilinear, antisymmetric map $\pi:T^\ast M\times T^\ast M\to\mathbb{R}$. If $\pi$ satisfies $[\pi,\pi]=0$ where $[.,.]$ is the Schouten bracket, then it defines a \emph{Poisson  structure} on $M$. An alternative definition is via a Poisson  bracket i.e., a bilinear skew-symmetric bracket  $\{.,.\}$ on the space of smooth function $C^\infty(M)$ which satisfies the Leibniz's rule and the Jacobi identity. These two definitions are connected by $\{f,g\}=\pi(df, dg),\,\forall f,g,\in C^\infty(M)$.  Similar to the $2$-form $\omega$, the bivector $\pi$ defines 
a vector bundle map $\pi^\sharp:T^\ast M\to T M$ by $\alpha\mapsto\pi(\alpha,.)$. This map is used to associate to a given function $H$ its Hamiltonian vector field by $ X_H=\pi^\sharp(d H)$. The Hamiltonian evolutionary games discussed in \cite{AD2014} are of this type. 
The local expression for the Jacobi identity (equivalently, closeness w.r.t  the Schouten bracket) is
\begin{equation}
\label{jacobi-condition-Poisson} \sum_{l=1}^m (\pi^\sharp_{ij}\frac{\partial \pi^\sharp_{ik}}{\partial u_l}+\pi^\sharp_{li}\frac{\partial \pi^\sharp_{kj}}{\partial u_l}+\pi^\sharp_{lk}\frac{\partial \pi^\sharp_{jl}}{\partial u_l})=0\quad\forall i,j,k.
\end{equation}
 The Jacobi identity~\eqref{jacobi-condition-Poisson} guaranties integrability of the possibly non-singular distribution defined the image of $\pi^\sharp$. This foliation is referred  to as \emph{characteristic foliation of $\pi$}. Each leaf of this foliation have a symplectic structure induced by $\pi$. The rank of $\pi$ at a given point $m$ is defined to be the rank of its characteristic foliation at $m$. The characteristic foliation is invariant under the flow of any Hamiltonian vector field $X_H$.

{\bf Dirac structure:} Dirac structure, introduced in \cites{MR998124,MR951168}, unites and generalizes Poisson  and presymplectic structures.
 For a manifold $M$, the vector bundle $\mathbb{T}M= TM\oplus T^\ast M$ is called \emph{the generalized tangent bundle of $M$}. In order to simplify notations, we will use the same notation for the generalized tangent bundle and the space of its sections i.e.,  the set
 $$\{(X,\alpha)\,,\mbox{where $X$ is a vector field on $M$ and $\alpha$ a $1$-form}\}.$$
  The same will hold for any subbundle $L$ of $\mathbb{T}M$.  The natural pairing
\begin{equation}
\label{pairing}
\langle (X,\alpha),(Y,\beta)\rangle:=\frac{1}{2}\left(\beta(X)+\alpha(Y)\right)\, \forall (X,\alpha),(Y,\beta)\in\mathbb{T}M,
\end{equation} 
defines a bilinear form on the space of the sections of $\mathbb{T}M$. For a given linear subbundle  $L$ of $\mathbb{T}M$, its annihilator w.r.t. the bilinear form $\langle.,.\rangle$ is defined as 
\[L^\perp:=\{(X,\alpha)\in \mathbb{T}M\,\,| \,\,\langle(X,\alpha),(Y,\beta)\rangle=0\quad \forall (Y,\beta)\in L\}.\]
The form $\langle.,.\rangle$ is neither positive definite nor negative definite. As a consequence the intersection $L\cap L^\perp$ can be non-empty.  Having this in mind, a linear subbundle $L$ of $\mathbb{T}M$ is called \emph{isotropic} if $L\subseteq L^\perp$ and \emph{maximal isotropic} when $L=L^\perp$. Maximal isotropy implies that the dimension of the fibers of $L$ is equal to the dimension of $M$.

\begin{defn}
A \emph{Dirac structure} on a manifold $M$ is a maximal isotropic linear subbunlde $L$ of $TM\oplus T^\ast M$  such that for any two sections  $(X,\alpha),(Y,\beta)\in L$ 
 \begin{equation}\label{courant-bracket}
\llbracket(X,\alpha),(Y,\beta)\rrbracket:=\left([X,Y], \mathcal{L}_X\beta-\mathcal{L}_Y\alpha+\frac{1}{2}d(\alpha(Y)-\beta(X))\right)\in L,
\end{equation}
where $[.,.]$ denotes the Lie bracket between vector fields and $\mathcal{L}$ stands for the Lie derivative. 
The bracket $\llbracket .,.\rrbracket$ is called \emph{the Courant bracket}.

 Furthermore, a vector field $X$  is called \emph{Hamiltonian} w.r.t. the Dirac structure $L$ if there exists a Hamiltonian  $H\in C^\infty(M)$ such that ${(X,d H)\in L.}$ 
 \end{defn}
 
 The first examples of Dirac structure to mention are
 \begin{itemize}
 \item[$\bullet$] The graph of a presymplectic form $\omega$ defined by 
 \[L_\omega:=\{(X,\omega^\sharp(X)),\, \mbox{for any vector field $X$}\}. \]
 Skew-symmetricness of $\omega$ yields  the maximal isotropy condition and  $\omega$ being close yields~\eqref{courant-bracket}.
 \item[$\bullet$] The graph of a Poisson bivector $\pi$ defined by 
 \[L_\pi:=\{(\pi^\sharp(\alpha),\alpha),\mbox{for any $1$-form $\alpha$ }\}.\]
  Similarly, skew-symmetricness of $\pi$ and the Jacobi identity ~\eqref{jacobi-condition-Poisson}  yield the requirements for being a Dirac structure.  
 \end{itemize}
 These  examples show that Dirac structure unifies presymplectic and Poisson  structures. 
 
 The following lemma shows that closeness w.r.t. the Courant bracket, in spite of its nonlinear character,  can be verified on a basis of a given isotropic subbundle $L$. 

 \begin{lemma}\label{check-on-basis}
Let $f\in C^\infty(M)$ and $(X,\alpha),(Y,\beta)\in \mathbb{T}M$. Then 
\begin{equation}\label{check-on-basis-1}
\llbracket(X,\alpha),f(Y,\beta)\rrbracket=f\llbracket(X,\alpha),(Y,\beta)\rrbracket+X(f).(Y,\beta)-\langle (X,\alpha),(Y,\beta)\rangle (0,df).
\end{equation}
\end{lemma}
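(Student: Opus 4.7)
The plan is to expand the left-hand side directly from the definition~\eqref{courant-bracket} of the Courant bracket, use the standard Leibniz-type identities for the Lie bracket, the Lie derivative, and the exterior derivative, and then collect terms. Everything reduces to careful bookkeeping; I do not anticipate any real obstacle.

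For the $TM$-component, since $f(Y,\beta)=(fY,f\beta)$, I would use the well-known identity $[X,fY]=f[X,Y]+X(f)Y$. The $f[X,Y]$ piece contributes to the first component of $f\llbracket(X,\alpha),(Y,\beta)\rrbracket$, while the $X(f)Y$ piece contributes to $X(f)\cdot(Y,\beta)$.

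For the $T^*M$-component, I would expand the three pieces of the Courant bracket separately. Using the product rule $\mathcal{L}_X(f\beta)=X(f)\beta+f\mathcal{L}_X\beta$, Cartan's magic formula to derive $\mathcal{L}_{fY}\alpha=f\mathcal{L}_Y\alpha+\alpha(Y)\,df$, and the Leibniz rule for $d$ to write $d\bigl(\alpha(fY)-f\beta(X)\bigr)=(\alpha(Y)-\beta(X))\,df+f\,d\bigl(\alpha(Y)-\beta(X)\bigr)$, I would substitute into
\[
\mathcal{L}_X(f\beta)-\mathcal{L}_{fY}\alpha+\tfrac{1}{2}d\bigl(\alpha(fY)-f\beta(X)\bigr).
\]
The $f$-linear contributions collect into $f$ times the original $T^*M$-component of $\llbracket(X,\alpha),(Y,\beta)\rrbracket$. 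The $X(f)\beta$ term contributes to $X(f)\cdot(Y,\beta)$. The coefficients of $df$ add up to $-\alpha(Y)+\tfrac{1}{2}\bigl(\alpha(Y)-\beta(X)\bigr)=-\tfrac{1}{2}\bigl(\alpha(Y)+\beta(X)\bigr)$, which by~\eqref{pairing} equals $-\ll(X,\alpha),(Y,\beta)\gg$, producing exactly the $-\ll(X,\alpha),(Y,\beta)\gg(0,df)$ term on the right.

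The only step that requires a moment's attention is the identity $\mathcal{L}_{fY}\alpha=f\mathcal{L}_Y\alpha+\alpha(Y)\,df$, which follows from $\mathcal{L}_{fY}\alpha=d\iota_{fY}\alpha+\iota_{fY}d\alpha=d(f\alpha(Y))+f\iota_Y d\alpha$. Once this is in hand, combining all contributions yields precisely~\eqref{check-on-basis-1}. No deeper argument, and no appeal to the integrability condition~\eqref{courant-bracket} itself, is needed — the identity is a purely algebraic consequence of the definition of $\llbracket\cdot,\cdot\rrbracket$ and $\ll\cdot,\cdot\gg$.
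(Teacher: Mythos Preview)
Your proof is correct and follows essentially the same direct-expansion approach as the paper: apply the definition of the Courant bracket to $(X,\alpha)$ and $(fY,f\beta)$, use the Leibniz rules for $[\,\cdot\,,\,\cdot\,]$, $\mathcal{L}$, and $d$ (including the identity $\mathcal{L}_{fY}\alpha=f\mathcal{L}_Y\alpha+\alpha(Y)\,df$), and collect terms. Your write-up is in fact slightly more explicit than the paper's in justifying the intermediate identities, but the argument is the same.
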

\begin{proof}
\begin{align*}
\llbracket(X,\alpha),f(Y,\beta)\rrbracket&=([X,fY],\mathcal{L}_X(f\beta)-\mathcal{L}_{fY}\alpha+\frac{1}{2}d\left(\alpha(fY)-f\beta(X)\right)\\
&=(f[X,Y]+X(f)Y,X(f)\beta+f\mathcal{L}_X \beta -f\mathcal{L}_{Y}\alpha-\alpha(Y).df\\
&+\frac{1}{2}f.d\left(\alpha(Y)-\beta(X)\right)+\frac{1}{2}\left(\alpha(Y)-\beta(X)\right).df)\\
&=f\llbracket(X,\alpha),(Y,\beta)\rrbracket+X(f).(Y,\beta)-\langle (X,\alpha),(Y,\beta)\rangle (0,df).
\end{align*}
\end{proof}

\begin{remark}\label{check-on-basis-3}
Note that if $L$ is isotropic, then the last term on the right hand side of equation~\eqref{check-on-basis-1} will vanish. Consequently, if for  $(X,\alpha),(Y,\beta)\in L$ we have that $\llbracket(X,\alpha),(Y,\beta)\rrbracket\in L$, then  $\llbracket(X,\alpha),f(Y,\beta)\rrbracket\in L$. 
\end{remark}

We now define the notion of  big-isotropic structure which generalizes all the previous structures. This notion is firstly defined and studied by Vaisman  in \cite{MR2343378,MR2349409}. Up to our knowledge, these two papers and author's preceding work~\cite{hassan-2020-2} are the only places where big-isotropic structures have appeared. 

\begin{defn}  \emph{A big-isotropic structure} is an isotropic linear subbundle $L$ of $TM\oplus T^\ast M$ such that for any two sections  $(X,\alpha),(Y,\beta)\in L$ their Courant bracket \eqref{courant-bracket} is also a section of $L$.
\end{defn}

As in the case of Dirac structure, a vector field $X\in\mathfrak{X}(M)$  is called \emph{Hamiltonian} w.r.t. the big-isotropic structure $L$ if there exists a Hamiltonian  $H\in C^\infty(M)$ such that $(X,d H)\in L.$ 
\begin{remark}\label{weak-hamiltonian}
In \cite{MR2349409} Vaisman defines the notion of a weak-Hamiltonian vector field in the context of big-isotropic structures  where a vector field $X$ is called weak-Hamiltonian with a Hamiltonian function $H$ if $(X,d H)\in L^\perp.$ There, he shows that  port-hamiltonian systems from control theory could be formalized as weak-Hamiltonian systems in this setting. Here, we consider Hamiltonian vector fields and not the weak ones but this is a direction that can be explored more in a future work.
\end{remark}

\begin{defn}\label{casimirs}  A function $F$ is called a Casimir of the big-isotropic structure $L$ if $(0,d F)\in L$ and a vector field $X$ is called isotropic w.r.t. $L$ if $(X,0)\in L$. Note that Casimirs are constants of motion for every Hamiltonian vector fields. Furthermore, adding an isotropic vector field $X$ to a Hamiltonian vector field $X_H$ yields an other Hamiltonian vector field with the same Hamiltonian function. 
\end{defn}
   
 Similar to Poisson structure, the possibly  nonsingular distribution $P_1(L)$, where $P_1$ is the projection from $\mathbb{T}M$ to the tangent bundle $TM$, of a big-isotropic structure $L$  is integrable  since the sections of $L$ are closed under the Courant bracket.  For this reason, closeness w.r.t. the Courant bracket is also referred to as integrability condition.  Every leaf $S$ of the foliation tangent to  $P_1(L)$ is equipped with the closed $2$-form
\begin{equation}\label{presymplectic-on-leaves}
\omega_S(P_1(X,\alpha),P_1(Y,\beta))=\frac{1}{2}\left(\alpha(Y)-\beta(X)\right)\quad \forall P_1(X,\alpha),P_1(Y,\beta)\in TS.
\end{equation}
The  presymplectic foliation defined this way is called \emph{characteristic foliation} of the big-isotropic structure $L$.

\section{Hamiltonian inverse problem}\label{hamiltonian-inverse-problem}
In this section we use big-isotropic structures to address the Hamiltonian inverse problem for a given vector field  $$X(u)=(\B\eta)(u),$$ where $\B=\B(u)$ is a matrix and  $\eta=\eta(u)$ is a vector. All over this paper, we will think  of $\eta$ as a $1$-form on $\mathbb{R}^m$. We start by introducing the type of big-isotropic structures we will be using. 
\begin{lemma}\label{lemma:our-dirac-structure}
Let $\B=\B(u)$ and $\D=\D(u)$ be two $(m\times m)$-matrices. We define the subbundle $L_{(\B,\Dt)}$ of  $T\mathbb{R}^m\oplus T^\ast \mathbb{R}^m$ by $$L_{(\B, \Dt)}(u):=\{({\bf B}(u)z,{\bf D}^t(u)z)|\quad \forall z\in\mathbb{R}^m\}\quad \forall u\in\mathbb{R}^m.$$
 The subbundle $L_{(\B,\Dt)}$ is a big-isotropic structure if and only if
\begin{enumerate}[(i)]
\item $(\D \B+\Bt\Dt)(u)=0$ for every point $u\in\mathbb{R}^m$.
\item For every $i,j=1,\ldots,m$ we have 
\[ \llbracket(E_i,\zeta_i),(E_j,\zeta_j)\rrbracket \in L_{(\B,\Dt)}\quad\mbox{where}\quad (E_i,\zeta_i)=(\B e_i,\Dt e_i)\,\,\forall i\]
\end{enumerate}
Furthermore it is a Dirac structure if it also satisfies 
\begin{enumerate}
\item[(iii)] $(\ker \B(u))\cap(\ker  \Dt(u))=0$ for every point $u\in\mathbb{R}^m.$ 
\end{enumerate}
\end{lemma}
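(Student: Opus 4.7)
The plan is to dispatch the three conditions in turn: first isotropy, then the Dirac maximality, then the Courant-bracket closure. For isotropy, two arbitrary sections of $L_{(\B,\Dt)}$ take the form $(\B z,\Dt z)$ and $(\B w,\Dt w)$ for smooth $z,w:\Rr^m\to\Rr^m$. Substituting into the pairing~\eqref{pairing} yields
\[
\ll (\B z,\Dt z),(\B w,\Dt w)\gg \;=\; \tfrac{1}{2}\bigl(w^t \D\B z + z^t \D\B w\bigr)\;=\;\tfrac{1}{2}\, z^t(\Bt\Dt + \D\B)\,w,
\]
which vanishes for all $z,w$ exactly when $\D\B+\Bt\Dt=0$; this identifies (i) with $L_{(\B,\Dt)}\subseteq L_{(\B,\Dt)}^\perp$.

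For the Dirac upgrade, the fibre $L_{(\B,\Dt)}(u)$ is the image of the linear map $z\mapsto(\B(u)z,\Dt(u)z)$, so by rank--nullity $\dim L_{(\B,\Dt)}(u)=m-\dim(\ker\B(u)\cap\ker\Dt(u))$. A maximal isotropic subbundle in $T\Rr^m\oplus T^\ast\Rr^m$ must have $m$-dimensional fibres, hence, under isotropy, maximality is equivalent to (iii). For the Courant closure, observe that every section of $L_{(\B,\Dt)}$ can be written as $\sum_{i=1}^m z_i(E_i,\zeta_i)$ with $z_i\in C^\infty(\Rr^m)$, so the sections $(E_i,\zeta_i)$ generate $L_{(\B,\Dt)}$ as a $C^\infty(\Rr^m)$-module. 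By Remark~\ref{check-on-basis-3} applied in the right slot, and by antisymmetry of $\llbracket\cdot,\cdot\rrbracket$ applied in the left slot, the property that the Courant bracket lands in $L_{(\B,\Dt)}$ is $C^\infty$-bilinear on sections of the isotropic subbundle. Integrability therefore reduces to checking closure on the generators, which is exactly (ii).

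The main obstacle is essentially bookkeeping: one must verify that Remark~\ref{check-on-basis-3}, which is stated for multiplication on one side, really does upgrade to $C^\infty$-bilinearity once isotropy and antisymmetry are taken into account, and that this bilinearity can then be iterated over all the coefficients $z_i,w_j$ simultaneously. Aside from that step, the proof consists of the matrix-transpose identity above, a rank--nullity count, and the observation that the $(E_i,\zeta_i)$ generate $L_{(\B,\Dt)}$ as a $C^\infty$-module.
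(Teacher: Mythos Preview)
Your proof is correct and follows the same route as the paper's own argument: condition (i) is identified with isotropy via the pairing computation, condition (iii) with maximality via a dimension count, and condition (ii) with Courant closure via Remark~\ref{check-on-basis-3}. You have simply filled in the details the paper leaves implicit, including the observation that antisymmetry of the Courant bracket is what extends Remark~\ref{check-on-basis-3} from the right slot to the left.
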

\begin{proof}
Item $(i)$ is equivalent to $L_{(\B,\Dt)}\subset L^\perp_{(\B,\Dt)}$ and item $(iii)$ yields $L_{(\B,\Dt)}=L^\perp_{(\B,\Dt)}$. Item $(ii)$ says that the integrability condition~\eqref{courant-bracket} is satisfied on the basis ${(E_i,\zeta_i)\, i=1,\ldots, n}$ and we have already showed in Lemma~\ref{check-on-basis} that this is enough to guarantee the integrability condition for $L_{(\B,\Dt)}$.
\end{proof}

\begin{remark}
Locally, any Dirac structure is of the type  introduced in Lemma~\ref{lemma:our-dirac-structure}-- see \cite[Section $1.2$]{MR998124} . 
\end{remark}
 
 The following corollary will be used to put a given big-isotropic structure $L_{(\B,\Dt)}$ in a normal form when the big-isotropic structure is a graph of a symplectic, presymplectic, or Poisson structures. 

\begin{corollary}\label{non-singular-cases} 
Let $L_{(\B,\Dt)}$ be a big-isotropic structure, as defined in Lemma~\ref{lemma:our-dirac-structure}, and $\W=\W(u)$  a non-singular $(m\times m)$-matrix. Then  $L_{(\B{\bf W},\Dt{\bf W})},$ is another representation of $L_{(\B,\Dt)}$. Consequently, 
\begin{enumerate}[(i)]
\item If $\B$ is non-singular, then $L_{(\B,\Dt)}=L_{(I, \Dt(\B)^{-1})}$ is the graph of the presymplectic form $\omega^\sharp=\Dt(\B)^{-1}$.  
\item If $\Dt$ is non-singular, then $L_{(\B,\Dt)}=L_{(\B(\Dt)^{-1},I)}$ is the graph of the  Poisson structure $\pi^\sharp= \B(\Dt)^{-1}$. 
\end{enumerate}
\end{corollary}
\begin{proof}
It is clear that as a subbundle $L_{(\B,\Dt)}=L_{(\B{\bf W},\Dt{\bf W})}$. This change of representation doesn't violate the requirements for the subbundle to be big-isotropic. The subbundle $L_{(\B{\bf W},\Dt{\bf W})}$ being isotropic is due to the fact
\[\D\B\quad\mbox{ is skew-symmetric}\quad\Leftrightarrow  \quad \W^t\D\B\W\quad \mbox{ is skew-symmetric},\]
and the integrability condition is satisfied thanks to the linear property mentioned in Remark~\ref{check-on-basis-3}. Also, note that this change of representation neither changes the characteristic foliation, nor the presymplectic structures on its leafs. 
\end{proof}

The change of representation, discussed in Corollary~\ref{non-singular-cases}, is introduced in~\cite[Theorem $1.3.3$]{MR998124} in a different form. It also used in~\cite{MR2422350} for finding a local normal form of Dirac structures.

Given a big-isotropic structure $L_{({\bf B},{\bf D}^t)}$, a pair $(X,d H)$ is Hamiltonian w.r.t. it  if and only if there exists a function $\eta:\mathbb{R}^m\to \mathbb{R}^m$ such that $X={\bf B}\eta$ and $dH={\bf D}^t\eta$. Now, we are ready to state the main result of this section. 

\begin{theorem}\label{main-result}
Let $X=\B\eta$ be a vector field on $\Rr^m$. If there exists a matrix $\D=\D(u)$  such that  
\begin{enumerate}[(1)]
\item the $1$-form $\Dt\eta$ is closed,
\item $\D\B(u)$ is skew-symmetric for any $u\in\Rr^m$,          
\end{enumerate}
then the function $H$ defined by $dH=\Dt\eta$ is a constant of motion for the vector field $X$. In addition, if  for any $i,j=1,\ldots,m$ there exists $c_{ij}\in\mathbb{R}^m$ such that 
\[\llbracket (E_i,\zeta_i),(E_j,\zeta_j)\rrbracket=(\B c_{ij},\Dt c_{ij}),\]
where $E_i=\B e_i$ and $\zeta_i=\Dt e_i$, then $X$ is Hamiltonian having the function $H$ as Hamiltonian function and the big-isotropic structure $L_{(\B,\Dt)}$ as underlying structure. Furthermore, 
\begin{enumerate}[(i)]
\item if $\ker\B(u)\cap \ker\D^t(u)=0, \forall u\in\mathbb{R}^m$, then $L_{(\B,\Dt)}$ is a Dirac structure.  
\item if the matrix $\B(u)$ is non-singular, then  the underlying structure is the presymplectic structure $\omega^\sharp=\Dt(\B)^{-1}$,
\item if the matrix $\Dt(u)$ is non-singular, then the underlying structure is the Poisson structure $\pi^\sharp= \B(\Dt)^{-1}$,
\item the intersection of items (ii) and (iii) would yield a symplectic structure as underlying geometry. 
\end{enumerate}
\end{theorem}
\begin{proof}
The function $H$ is a constant of motion since
\[<X, dH>=<\B\eta,\D^t\eta>=\eta^t\D\B\eta=0,\]
where we used the fact that $\D\B$ is skew-symmetric. The rest of the statements are immediate from the definition of Hamiltonian systems in the context of big-isotropic structures, Lemma~\ref{lemma:our-dirac-structure} and Corollary~\ref{non-singular-cases}. 
\end{proof}

\section{Linear Hamiltonian Systems}\label{sec:linear-vector-fields}
 In this section, we apply our method to a given linear vector field
\begin{equation}\label{constant-gradiant}
X(u)=Bu,\quad u\in\mathbb{R}^m,
\end{equation}
 where $B$ is a constant $m\times m$-matrix.  Since $B$ is constant, if we find a constant $D$  such that the matrix $DB$ is skew-symmetric, then the  pair $(B,D^t)$ generates the big-isotropic structure
 $$L_{(B,D^t)}(u):=\{(Bz,D^tz) | \forall z\in\mathbb{R}^m\}.$$
 In other words, the integrability condition is satisfied automatically. 
 Theorem~\ref{main-result} also requires that the $1$-form $D^tu$ be closed. This requirement forces $D$ to be symmetric. The following corollary of Theorem~\ref{main-result} sums up what we have said. 
\begin{corollary} \label{cor:linear-vector-fileds}
Let $B$ be a constant matrix. If there exists a constant matrix $D$ such that $DB$ is skew-symmetric, then $L_{(B,D^t)}(u):=\{(Bz,D^tz) | \forall z\in\mathbb{R}^m\}$ is a big-isotropic structure. Furthermore, if $D$ is symmetric, then the linear system $X(u)=Bu$ is Hamiltonian w.r.t the big-isotropic structure $L_{(B,D^t)}(u)$ and  the Hamiltonian function $H(u)=u^t D u$. 
\end{corollary}

Clearly, items (i)-(iv) of  Theorem~\ref{main-result} hold as well. Also, note that every element $\eta\in\ker B$ which does not belong to $\ker D$ yields a non-trivial linear Casimir $H_\eta$ defined by $d H_\eta= D\eta$ and every element $\xi\in \ker D$ which is not in  $\ker B$ yields a non-trivial isotropic vector filed $X_\xi=B\xi$. 

\begin{remark}\label{normalizing-to-jordan}
Under a linear change of coordinates $u=Tv$ we have
\[X(v)=T^{-1}B T v,\quad\mbox{and}\quad H(v)=v^tT^tD T v,\]
i.e., the matrices $B$ and $D$ are, respectively, transformed to $T^{-1}B T$ and $T^tD T$. Clearly, 
\begin{itemize}
\item[-] $DB$ is skew-symmetric if and only if $T^tDT T^{-1}BT$ is so.
\item[-] $D$ is symmetric if and only if $T^t DT$ is so. 
\end{itemize}
Then, instead of $B$ we could consider any other member of its conjugacy class. Here, we will use the Jordan normal form of $B$. 
\end{remark}

In order to sate our next result we need to set some notations. The notations $I_n$ and $\0_{m\times n}$ will, respectively, denote the $n$ dimensional identity matrix and the $m\times n$-dimensional zero matrix. The dimension subscripts will be removed when there is no ambiguity. We will also use the notation ${\rm diag}(A_1,A_2,...,A_k)$ for a matrix with diagonal blocks $A_1,A_2,...,A_k$.  The notation $\J(\lambda)$ will stand for the Jordan block of a real eigenvalue $\lambda$. In general the Jordan block $\J(\lambda)$ has the form $$\J(\lambda)=\diag(\lambda I_r,J_{s_1}(\lambda),\ldots,J_{s_k}(\lambda))$$ where 
\begin{equation}\label{jordan-block-real}
J_{s_j}(\lambda) =
\begin{tikzpicture}[baseline=(current bounding box.center)]
\matrix (m) [matrix of math nodes,nodes in empty cells,right delimiter={)},left delimiter={(} ]{
\lambda&1&0& &0\\
0&\lambda&1& &0\\
& & &&\\ 
0&0&&\lambda&1\\
0&0&&0&\lambda\\
} ;
\draw[loosely dotted] (m-1-3)-- (m-1-5);
\draw[loosely dotted] (m-2-3)-- (m-2-5);
\draw[loosely dotted] (m-2-1)-- (m-4-1);
\draw[loosely dotted] (m-2-2)-- (m-4-2);
\draw[loosely dotted] (m-2-5)-- (m-4-5);
\draw[loosely dotted] (m-2-2)-- (m-4-4);
\draw[loosely dotted] (m-2-3)-- (m-4-5);
\draw[loosely dotted] (m-4-2)-- (m-4-4);
\draw[loosely dotted] (m-5-2)-- (m-5-4);
\end{tikzpicture}_{s_j\times s_j},
\end{equation}
Similarly, $\J(a\pm bi)$ will be used for the Jordan block of a complex eigenvalue $(a\pm b i)$. The general form of this block is 
$$\J(a\pm bi)=\diag(\oplus^rB_{(a\pm bi)},J_{2s_1}(a\pm bi),\ldots,J_{2s_k}(a\pm bi)),$$
where $\oplus^rB_{(a\pm bi)}=\diag(B_{(a\pm bi)},\ldots,B_{(a\pm bi)})$,
$ B_{(a\pm bi)}=\begin{pmatrix}a&b\\-b&a\end{pmatrix}$ and
\begin{equation}\label{jordan-block-complex}
J_{2s_j}(a\pm bi)= \begin{tikzpicture}[baseline=(current bounding box.center)]
\matrix (m) [matrix of math nodes,nodes in empty cells,right delimiter={)},left delimiter={(} ]{
B_{(a\pm bi)}&I_2&\0&&\0\\
\0&B_{(a\pm bi)}&I_2&&\0\\
&&&&\\
\0&\0&&B_{(a\pm bi)}&I_2\\
\0&\0&&\0&B_{(a\pm bi)}\\} ;
\draw[loosely dotted] (m-1-3)-- (m-1-5);
\draw[loosely dotted] (m-2-3)-- (m-2-5);
\draw[loosely dotted] (m-2-1)-- (m-4-1);
\draw[loosely dotted] (m-2-2)-- (m-4-2);
\draw[loosely dotted] (m-2-5)-- (m-4-5);
\draw[loosely dotted] (m-2-2)-- (m-4-4);
\draw[loosely dotted] (m-2-3)-- (m-4-5);
\draw[loosely dotted] (m-4-2)-- (m-4-4);
\draw[loosely dotted] (m-5-2)-- (m-5-4);
\end{tikzpicture}_{2s_j\times 2s_j}.
\end{equation}
 
 \begin{remark}\label{noation-pm}
With an abuse of notation, we will use the same letters for the possibly different dimensions of the Jordan blocks. This would normally happen for the eigenvalues with different absolute values. We  will also abuse the notation $d$ to make the equations more readable. 
\end{remark}

We need to set one more notation.  For a given number $m$, we will use the notation $K_m(l)$, $l=1\ldots m$, for the matrix with all its elements equal to zero except the $l^{\rm th}$ inferior anti-diagonal which contains $1$s and $-1$s in an alternative manner starting with $1$. The index $l=1$ is used for the main anti-diagonal, for example
\begin{equation}\label{notation-K}
\scalemath{.9}{K_{4}(1)=\begin{pmatrix}
0&0&0&1\\
0&0&-1&0\\
0&1&0&0\\
-1&0&0&0\\
\end{pmatrix}}
\end{equation}
For $m<n$ we set ${K_{m\times n}(l)=\begin{pmatrix}
\0& K_m(l)
\end{pmatrix}}$ and for $m>n$: $K_{m\times n}(l)=\begin{pmatrix}
K_m(l)\\
\0
\end{pmatrix},$ where $l=1,.\ldots,\min \{ m,n \}$. 
For example
\begin{equation}\label{notation-K-2}
\scalemath{.9}{K_{4\times 5}(1)=\begin{pmatrix}
0&0&0&0&1\\
0&0&0&-1&0\\
0&0&1&0&0\\
0&-1&0&0&0\\
\end{pmatrix}},\quad \scalemath{.9}{K_{5\times 4}(4)=\begin{pmatrix}
0&0&0&0\\
0&0&0&0\\
0&0&0&0\\
0&0&0&1\\
0&0&0&0
\end{pmatrix}.}
\end{equation}
%When $s^+_i\geq s^-_j$,  we will consider the dimension $s_j\times s_j$ for constructing $K_l$ where $l=1,\ldots,0$ and then add extra $s_j-s_j$ zero lines at end of the constructed $K_L$ matrix. For example, in the case of $4\times 5$ matrices we have 
%$$
%\scalemath{.9}{K_3=\begin{pmatrix}
%0&0&0&0\\
%0&0&0&0\\
%0&0&0&1\\
%0&0&-1&0\\
%0&0&0&0
%\end{pmatrix}}.$$
 In the case of complex eigenvalues we will use a similar notation $K^c_{2m\times 2n}(l),$\, ${l=1,\ldots,\min \{m,n\}}$ where  anti-diagonals are replaced by anti-diagonals made of $2\times 2$ blocks and $1,-1$ are replaced by $I_2,-I_2$ matrices. For example
\begin{equation}\label{notation-Kc}
\scalemath{.9}{K^c_{8\times 10}(2)=\begin{pmatrix}
\0&\0&\0&\0&\0\\
\0&\0&\0&\0&I_2\\
\0&\0&\0&-I_2&\0\\
\0&\0&I_2&\0&\0
\end{pmatrix}},\quad \scalemath{.9}{K^c_{10\times 8}(3)=\begin{pmatrix}
\0&\0&\0&\0\\
\0&\0&\0&\0\\
\0&\0&\0&I_2\\
\0&\0&-I_2&\0\\
\0&\0&\0&\0
\end{pmatrix}}.
\end{equation}
  We now present our next result.
\begin{theorem}\label{diagonal-form-D}
For the matrix $B$ of the form 
 \begin{equation}\label{B-in-jordan-form}
 \scalemath{.9}{
 {\rm diag}(\J(0),\underbrace{\J(\lambda_1),\J(-\lambda_1), \ldots\ldots}_{\mbox{\scalebox{.6}{real nonzero eigenvalues}}},\underbrace{\J(\pm \theta_1i),\ldots\ldots}_{\mbox{\scalebox{.6}{pure imaginary eigenvalues}}},\underbrace{\J(a_1\pm b_1i),\J(-(a_1\pm b_1i)),\ldots}_{\mbox{\scalebox{.6}{complex eigenvalues with $a\neq 0$}}}),}
\end{equation}
 the  symmetric matrix $D$ which makes $DB$ skew-symmetric is of the form
\begin{equation}\label{matrix-D}
D=\diag(\Dd(0),\Dd(\pm\lambda_1),\ldots,\Dd(\pm  \theta_1 i),\ldots,\ldots,\Dd(\pm(a_1\pm b_1i)),\ldots),
\end{equation}
where
\begin{enumerate}[(i)]
\item The matrix $\Dd(0)$   is of the form
\begin{equation}\label{D-for-eigenvalue-zero}
\begin{pmatrix}
D_{r}&D_{r,s_1}&D_{r,s_2}&\ldots&D_{r,s_k}\\
D^t_{r,s_1}&D_{s_1}&D_{s_1,s_2}&\ldots&D_{s_1,s_k}\\
D^t_{r,s_2}&D^t_{s_1,s_2}&\ddots&&\vdots\\
\vdots&\vdots&&\ddots&\vdots\\
D^t_{r,s_k}&D^t_{r,s_k}&\ldots&\ldots&D_{s_k}
\end{pmatrix}
\end{equation}
where
\begin{enumerate}[(1)]
\item $D_{r}$ is an arbitrary symmetric matrix,
\item all the elements of $D_{r,s_j}$ are zero except the last column which is an arbitrary $r$-dimensional column vector,
\item $D_{s_j}=\sum_{i=0}^kd_{s_j-2l}K_{s_j}(s_j-2l)$ where ${s_j-2k=1\,\, \mbox{or}\,\, 2}$, the matrix ${K_{s_j}(.)}$ is  defined in~\eqref{notation-K} and  $d_{s_j-2l}$ is an arbitrary number.
 For example
\begin{equation}\label{D4-D5}\scalemath{.8}{D_{4}=\begin{pmatrix}
0&0&0&0\\
0&0&0&d_{2}\\
0&0&-d_{2}&0\\
0&d_{2}&0&d_{4}
\end{pmatrix}},\quad\scalemath{.8}{D_{5}=\begin{pmatrix}
0&0&0&0&d_{1}\\
0&0&0&-d_{1}&0\\
0&0&d_{1}&0&d_{3}\\
0&-d_{1}&0&-d_{3}&0\\
d_{1}&0&d_{3}&0&d_{5}\\
\end{pmatrix}.}
\end{equation}
\item  $D_{s_i,s_j}=\sum_{l=1}^{\min \{s_i,s_j\}}d_l K_{s_i\times s_j}(l)$ where $d_l$ is an arbitrary number and $K_{s_i\times s_j}(.)$ is defined in~\eqref{notation-K-2}.
For example
$$
\scalemath{.8}{D_{4,5}=\begin{pmatrix}
0&0&0&0&d_{1}\\
0&0&0&-d_{1}&d_{2}\\
0&0&d_{1}&-d_{2}&d_{3}\\
0&-d_{1}&d_{2}&-d_{3}&d_{4}
\end{pmatrix},\quad D_{5,4}=\begin{pmatrix}
0&0&0&d_{1}\\
0&0&-d_{1}&d_{2}\\
0&d_{1}&-d_{2}&d_{3}\\
-d_{1}&d_{2}&-d_{3}&d_{4}\\
0&0&0&0
\end{pmatrix}.}
$$
\end{enumerate}

\item The matrix $\Dd(\pm\lambda_1)$  is of the form
\begin{equation}\label{D-for-eigenvalue-real-nonzero}
\scalemath{.9}{
%%%line-1%%%
\begin{pmatrix}
\0_{r^+\times r^+}&\0&\ldots&\0 &D_{r^+,r^-}&D_{r^+,s_1^-}&\ldots&D_{r^+,s_l^-}
\\
%%%line-2%%%%
\0&\0_{s^+_1\times s^+_1}&\ldots&\0 &D^t_{r^-,s_1^+}&D_{s_1^+,s_1^-}&\ldots&D_{s_1^+,s_l^-}
\\
%%%line-3%%%%
\vdots &\vdots&\ddots &\vdots &\vdots&\vdots&\ddots&\vdots
\\
%%%line-4%%%%
\0&\0&\ldots&\0_{s^+_k\times s^+_k}&D^t_{r^-,s_k^+}&D_{s_k^+,s_1^-}&\ldots&D_{s_k^+,s_l^-}
\\
%%%line-5%%%%
D^t_{r^+,r^-}&D_{r^-,s_1^+}&\ldots&D_{r^-,s_k^+}&\0_{r^-\times r^-}&\0&\ldots&\0
\\
%%%line-6%%%%
D^t_{r^+,s_1^-}&D^t_{s_1^+,s_1^-}&\ldots&D^t_{s_k^+,s_1^-}&\0&\0_{s^-_1\times s^-_1}&\ldots&\0
\\
%%%line-7%%%%
\vdots&\vdots&\ddots&\vdots&\vdots&\vdots&\ddots&\vdots
\\
%%%line-8%%%%
D^t_{r^+,s_l^-}&D^t_{s_1^+,s_l^-}&\ldots&D^t_{s_k^+,s_l^-}&\0&\0&\ldots&\0_{s^-_l\times s^-_l}
\end{pmatrix}.}
\end{equation}
In this matrix the indices with positive superscript refer to the dimensions of the blocks in $\J(\lambda_1)$ and the indices with negative superscript to the dimensions of the blocks in $\J(-\lambda_1)$. Furthermore, 

\begin{enumerate}[(1)]
\item $D_{r^+,r^-}$ is an arbitrary matrix,
\item the matrices of type $D_{r^\ast,s^{-\ast}_j}$ where $\ast=+,-$  have all the elements equal zero except the last column which is an arbitrary $r^\ast$-dimensional column vector,
\item $D_{s_i^+,s_j^-}=\sum_{l=1}^{\min \{s_i^+,s_j^-\}}d_l K_{s_i\times s_j}(l)$ where $d_l$ is an arbitrary number and $K_{s_i\times s_j}(.)$ is defined in~\eqref{notation-K-2}. This is the same matrix as in item {\bf (i)}-$4$.
\end{enumerate}

\item The matrix $\Dd(\pm \theta_1 i)$  is of the form
\begin{equation}\label{D-for-eigenvalue-pure-imaginary}
\begin{pmatrix}
D_{2r}&D_{2r,2s_1}&D_{2r,2s_2}&\ldots&D_{2r,2s_k}\\
D^t_{2r,2s_1}&D_{2s_1}&D_{2s_1,2s_2}&\ldots&D_{2s_1,2s_k}\\
D^t_{2r,2s_2}&D^t_{2s_1,2s_2}&\ddots&&\vdots\\
\vdots&\vdots&&\ddots&\vdots\\
D^t_{2r,2s_k}&D^t_{2r,2s_k}&\ldots&\ldots&D_{2s_k}
\end{pmatrix}
\end{equation}
where
\begin{enumerate}[(1)]
\item $D_{2r}=\begin{pmatrix}
d_{11}I_2&D_{(\alpha_{12}\pm \beta_{12}i)}&\ldots&D_{(\alpha_{1r}\pm \beta_{1r}i)}\\
D_{(\alpha_{12}\pm \beta_{12}i)}&d_{22}I_2&\ldots&D_{(\alpha_{2r}\pm \beta_{2r}i)}\\
\vdots&\vdots\vdots\vdots&\ddots&\vdots\\
D_{(\alpha_{1r}\pm \beta_{1r}i)}&D_{(\alpha_{2r}\pm \beta_{2r}i)}&\ldots&d_{rr}I_2
\end{pmatrix}$ wherein $\scalemath{.9}{D_{(\alpha_{ij}\pm \beta_{ij}i)}=\begin{pmatrix}\alpha_{ij}&\beta_{ij}\\-\beta_{ij}&\alpha_{ij}\end{pmatrix}},$ 

\vspace{.5cm}
\item $D_{2r,2s_i}=\begin{pmatrix}
\0&\ldots&\0&D_{(\alpha_1\pm \beta_1i)}\\\0&\ldots&\0&D_{(\alpha_2\pm \beta_2i)}\\
\vdots&\vdots&\vdots&\vdots\\
\0&\ldots&\0&D_{(\alpha_r\pm \beta_ri)}
\end{pmatrix}_{r\times s_j},$

\item $D_{2s_j}=D_0+D_1$ where
\begin{align*}
&D_0=\sum_{l=0}^{k_0}\alpha_{s_j-2l}K^c_{2s_j}(s_j-2l),\\
&D_1=\sum_{l=0}^{k_1}D_{(\pm\beta_{(s_j-(2l+1))}i)}K^c_{2s_j}(s_j-(2l+1)),
\end{align*}
 and $K^c_{2s_j}(.)$ is defined in~\eqref{notation-Kc}. Depending on $s_j$ the indices $(s_j-2k_0),{(s_j-(2k_1+1))}$ go down to either $1$
 or  $2$. For example
\begin{equation}\label{D8-D10-case(iii)}
\scalemath{.7}{D_{8}=\begin{pmatrix}
\0&\0&\0&D_{\pm(\beta_1 i)}\\
\0&\0&-D_{(\pm\beta_1 i)}&\alpha_2I_2\\
\0&D_{(\pm\beta_1 i)}&-\alpha_2I_2&D_{(\pm\beta_3 i)}\\
-D_{(\pm\beta_1 i)}&\alpha_2I_2&-D_{(\pm\beta_3 i)}&d_4 I_2
\end{pmatrix}},\quad\scalemath{.7}{D_{10}=\begin{pmatrix}
\0&\0&\0&\0&\alpha_1I_2\\
\0&\0&\0&-\alpha_1I_2&D_{(\pm\beta_2 i)}\\
\0&\0&\alpha_1I_2&-D_{(\pm\beta_2 i)}&\alpha_3I_2\\
\0&-\alpha_1I_2&D_{(\pm\beta_2 i)}&-\alpha_3I_2&D_{(\pm\beta_4 i)}\\
\alpha_1I_2&-D_{(\pm\beta_2 i)}&\alpha_3I_2&-D_{(\pm\beta_4 i)}&\alpha_5I_2\\
\end{pmatrix}.}
\end{equation}
\item  Similar to item ${\bf i}$-$4$
 $$D_{2s_i,2s_j}=\sum_{l=1}^{\min\{s_i,s_j\}} D_{(\alpha_l\pm\beta_l i)}K^c_{2s_i\times 2s_j}(l)$$ where $K^c_{2s_i\times 2s_j}(.)$ is defined in~\eqref{notation-Kc}.
For example
$$
\scalemath{.8}{D_{8,10}=\begin{pmatrix}
\0&\0&\0&\0&D_{(\alpha_1\pm\beta_1i)}\\
\0&\0&\0&-D_{(\alpha_1\pm\beta_1i)}&D_{(\alpha_2\pm\beta_2i)}\\
\0&\0&D_{(\alpha_1\pm\beta_1i)}&-D_{(\alpha_2\pm\beta_2i)}&D_{(\alpha_3\pm\beta_3i)}\\
\0&-D_{(\alpha_1\pm\beta_1i)}&D_{(\alpha_2\pm\beta_2i)}&-D_{(\alpha_3\pm\beta_3i)}&D_{(\alpha_4\pm\beta_4i)}
\end{pmatrix}.}
$$
\end{enumerate}
\item The matrix $\Dd_{2s^+,2s^-}(\pm(a\pm bi))$  is of the form
\begin{equation}\label{D-for-eigenvalue-complex}
\scalemath{.9}{
%%%line-1%%%
\begin{pmatrix}
\0_{2r^+\times 2r^+}&\0&\ldots&\0 &D_{2r^+,2r^-}&D_{2r^+,2s_1^-}&\ldots&D_{2r^+,2s_k^-}
\\
%%%line-2%%%%
\0&\0_{2s^+_1\times 2s^+_1}&\ldots&\0 &D^t_{2r^-,2s_1^+}&D_{2s_1^+,2s_1^-}&\ldots&D_{2s_1^+,2s_k^-}
\\
%%%line-3%%%%
\vdots &\vdots&\ddots &\vdots &\vdots&\vdots&\ddots&\vdots
\\
%%%line-4%%%%
\0&\0&\ldots&\0_{2s^+_k\times 2s^+_k}&D^t_{2r^-,2s_k^+}&D_{2s_k^+,2s_1^-}&\ldots&D_{2s_k^+,2s_k^-}
\\
%%%line-5%%%%
D^t_{2r^+,2r^-}&D_{2r^-,2s_1^+}&\ldots&D_{2r^-,2s_k^+}&\0_{2r^-\times 2r^-}&\0&\ldots&\0
\\
%%%line-6%%%%
D^t_{2s_1^-,2r^+}&D^t_{2s_1^+,2s_1^-}&\ldots&D^t_{2s_k^+,2s_1^-}&\0&\0_{2s^-_1\times 2s^-_1}&\ldots&\0
\\
%%%line-7%%%%
\vdots&\vdots&\ddots&\vdots&\vdots&\vdots&\ddots&\vdots
\\
%%%line-8%%%%
D^t_{2s_k^-,2r^+}&D^t_{2s_1^+,2s_k^-}&\ldots&D^t_{2s_k^+,2s_k^-}&\0&\0&\ldots&\0_{2s^-_k\times 2s^-_k}
\end{pmatrix},}
\end{equation}
where

\begin{enumerate}[(1)]
\item $D_{2r^+,2r^-}=\begin{pmatrix}
D_{(\alpha_{11}\pm \beta_{11}i)}&D_{(\alpha_{12}\pm \beta_{12}i)}&\ldots&D_{(\alpha_{1r^-}\pm \beta_{1r^-}i)}\\
D_{(\alpha_{12}\pm \beta_{12}i)}&D_{(\alpha_{22}\pm \beta_{22}i)}&\ldots&D_{(\alpha_{2r^-}\pm \beta_{2r^-}i)}\\
\vdots&\vdots\vdots\vdots&\ddots&\vdots\\
D_{(\alpha_{r^+1}\pm \beta_{r^+1}i)}&D_{(\alpha_{r^+2}\pm \beta_{r^+2}i)}&\ldots&D_{(\alpha_{r^+r^-}\pm \beta_{r^+r^-}i)}
\end{pmatrix},$
\item $D_{2r^\ast,2s^{-\ast}_j}=\begin{pmatrix}
\0&\ldots&\0&D_{(\alpha_1\pm \beta_1i)}\\\0&\ldots&\0&D_{(\alpha_2\pm \beta_2i)}\\
\vdots&\vdots&\vdots&\vdots\\
\0&\ldots&\0&D_{(\alpha_{2r^\ast}\pm \beta_{2r^\ast} i)}
\end{pmatrix}_{2r^\ast \times 2s^{-\ast}_j}$ where $\ast=+,-,$
\item  $D_{2s^+_i,2s^-_j}$ is as defined in item {\bf (iii)}-$4$.
\end{enumerate}
\end{enumerate}
\end{theorem}
\begin{proof}
The proof is similar to the proof  of~\cite[Theorem (9.1.1)]{MR2228089}. We first prove that $D$ has the diagonal form~\eqref{matrix-D}. In order to do so, we need to discuss the following three cases.
\begin{itemize}
\item[1)] $B=\diag(J_{s_1}(\lambda_1),J_{s_2}(\lambda_2))$ where $\lambda_1+\lambda_2\neq 0$. We write the matrix $D$ in the block form 
 $$D=\begin{pmatrix}Z_{s_1}&Z\\Z^t&Z_{s_2}\end{pmatrix}.$$
The matrix $DB$ being skew-symmetric implies
\begin{equation}\label{equ:diagonal-form0}
Z J_{s_2}(\lambda_2)=-J^t_{s_1}(\lambda_1) Z.
\end{equation}
 Rewriting this equation, we have
\begin{equation}\label{diagonal-form-equation}
(\lambda_2+\lambda_1) Z=-(H^t_{s_1} Z+ Z H_{s_2}),
\end{equation}
where 
\begin{equation}\label{H-m}
H_{s_i}=\begin{pmatrix}0&1&0&...&0\\0&0&1&...&0\\\vdots&\vdots&\ddots&\ddots&\vdots\\0&0&0&\ddots&1\\0&0&0&...&0\end{pmatrix},
\end{equation}
is the nilpotent upper shift matrix. Now, we multiply both sides of~\eqref{diagonal-form-equation} by ${(\lambda_2+\lambda_1)}.$ Using Equality~\eqref{diagonal-form-equation} we get
\[ (\lambda_2+\lambda_1)^2 Z=(H^t_{s_1})^2+2H^t_{s_1}Z H_{s_2}+Z (H_{s_2})^2.\]
Repeating this process, we obtain that for every $p=1,2,3,..$
\begin{equation}\label{diagonal-2}(\lambda_2+\lambda_1)^p Z=\sum_{q=1}^p\begin{pmatrix}p\\q\end{pmatrix}(H^t)^q_{s_1}Z H^{p-q}_{s_2}.\end{equation}
For $p$ large enough the right hand side of Equation~\eqref{diagonal-2} vanishes. Since ${\lambda_2+\lambda_1\neq0}$, the off-diagonal bloc $Z$ vanishes.
\item[2)] $B=\diag(J_{2s_1}(a_1\pm b_1i),J_{2s_2}(a_2\pm b_2i)$ where $(a_1\pm b_1i)\neq - (a_2\pm b_2i)$. As in item $(1)$, the matrix  $DB$ being skew-symmetric implies 
\begin{equation}\label{eq:diagonal-2}Z J_{2s_2}(a_2\pm b_2i)=-J^t_{2s_1}(a_1\pm b_1i) Z.\end{equation}
 We divide $Z$ into $2\times 2$ blocks $Z_{ij},\, i=1,...,s_1$ and $j=1,...,s_2$.  An strait forward calculation shows that the equation $$Z_{ij}B_{(a_2+b_2i)}=-B_{(a_2+b_2i)}Z_{ij}$$
 has nontrivial solution for $Z_{ij}$ if and only if  $(a_1+a_2)= 0$ and $(b_1^2-b_2^2)= 0$ i.e., $(a_1\pm b_1i) = - (a_2\pm b_2i)$ which is not the case here. Now, Equation~\eqref{eq:diagonal-2} implies that:
 \begin{align*}
& Z_{11}B_{(a_2+b_2i)}=-B_{(a_2+b_2i)}Z_{11},\\&Z_{11}+Z_{12}B_{(a_2+b_2i)}=-B_{(a_2+b_2i)}Z_{12},\\&\vdots \\&Z_{1(s_j-1)}+Z_{1s_j}B_{(a_2+b_2i)}=-B_{(a_2+b_2i)}Z_{1s_j}.\end{align*}
which yields $Z_{11}=Z{12}=\ldots=Z_{1s_j}=0$.
\item[$3)$] $B=\diag(J_{s_1}(\lambda_1),J_{2s_2}(a_2+b_2i))$ where $b_2\neq0$. Similarly, the matrix  $DB$ being skew-symmetric implies 
\begin{equation}\label{eq:diagonal-3}
Z J_{2s_2}(a_2+b_2i)=-J^t_{s_1}(\lambda_1) Z.
\end{equation}
We divide $Z$ into $s_1\times 2$ blocks $Z^j,\,j=1,..,s_2.$ It is very easy to see that the equation 
$$Z^jB_{(a_2+b_2i)}=-J^t_{s_1}(\lambda_1) Z^j,$$ has nontrivial solution if and only if $(a_2-\lambda_1)^2+b_2^2=0$ which is not the case here. 
Equation~\eqref{eq:diagonal-3} implies 
\begin{align*}
&Z^1B_{(a_2+b_2i)}=-J^t_{s_1}(\lambda_1) Z^1,\\
&Z^1+Z^2B_{(a_2+b_2i)}=-J^t_{s_1}(\lambda_1) Z^2,\\
&\vdots\\
&Z^{s_j-1}+Z^{s_j}B_{(a_2+b_2i)}=-J^t_{s_1}(\lambda_1) Z^{s_j},\\
\end{align*}
 \end{itemize}
 which, again, yields  $Z=0$. The rest of the proof requires some cumbersome but strait forward calculations.  For any item we provide an outline of the proof without going into detailed calculations. 
\begin{itemize}
\item[{\bf i)}]
The block $D_r$  is an  arbitrary symmetric matrix since it is associated to the zero block of $B$. The symmetric diagonal blocks $D_{s_i},\,i=1,...,k$ should satisfy 
${H^t_{s_i}D_{s_i}=-D_{s_i}H_{s_i}}$ where $H_{s_i}$ is the shift matrix defined in~\eqref{H-m}. 
The product $H^t_{s_i}D_{s_i}$ is obtained from $D_{s_i}$ shifting all the rows one place downward and filling the first row with zeros. Similarly, $D_{s_i}H_{s_i}$ is obtained from $D_{s_i}$ shifting all the columns one place to the right and filling the first column with zeros. 
Having this in mind, it is quite strait forward to verify that the matrix $D_{s_i}$ has the form described in the lemma.  The off-diagonal blocks $D_{s_i,s_j}$ should satisfy ${H^t_{s_i}D_{s_i}=-D_{s_j}H_{s_j}}$.  Again, applying the downward and rightward shifts one can verify that $D_{s_i,s_j}$ has the form stated in the lemma. Note that the matrix $D_{s_i,s_j}$ is not necessarily quadratic nor symmetric. A similar line of reasoning applies to $D_{r,s_i}$.
\item[{\bf ii)}] In this case, the required equation for $DB$ to be skew-symmetric is 
$$(\lambda_2+\lambda_1) Z=-(H^t_{s_1} Z+ Z H_{s_2}),$$
where $\lambda_1,\lambda_2=\pm\lambda$. When $\lambda_1=\lambda_2$,  the same reasoning, used in the beginning of the proof, applies and the block $Z$ should be zero. When $\lambda_2=-\lambda_1$ exploiting the shifting properties of $H_{s_i}$ shows that 
$D_{r^\ast,s^{-\ast}_j}$ and $D_{s^+_i,s^-_j}$ have the form given in the lemma. For $D_{r^+,r^-}$ the dimension of the Jordan blocks (associated to non-degenerate eigenvalues) involved are one i.e., $H_{s_i}=0$ so it could be arbitrary. 
\item[{\bf iii)}] We divide the matrix $\Dd(\pm \theta i)$ into $2\times 2$ blocks $Z_{ij}$.  As for the block $D_{2r}$, the matrix $DB$ being skew-symmetric implies that these blocks should satisfy 
$$Z_{ij}\begin{pmatrix}0&b\\-b&0\end{pmatrix}=-\begin{pmatrix}0&-b\\b&0\end{pmatrix}Z_{ij}.$$
  Solving this equation, one gets the symmetric blocks $Z_{ii}=d_{ii} I $ and the blocks $B_{(\alpha_{ij}+\beta_{ij}i)},\, i<j$ as claimed in the lemma. 
 The first "lines" ( in the two by two block form)  of the blocks $D_{2s_i}$, $D_{2s_i,2s_j}$ should satisfy
\begin{align*}
& Z_{11}\begin{pmatrix}0&b\\-b&0\end{pmatrix}=-\begin{pmatrix}0&b\\-b&0\end{pmatrix}Z_{11},\\&Z_{11}+Z_{12}\begin{pmatrix}0&b\\-b&0\end{pmatrix}=-\begin{pmatrix}0&b\\-b&0\end{pmatrix}Z_{12},\\&\vdots \\&Z_{1(s_j-1)}+Z_{1s_j}\begin{pmatrix}0&b\\-b&0\end{pmatrix}=-\begin{pmatrix}0&b\\-b&0\end{pmatrix}Z_{1s_j}.\end{align*}
Again solving these equations for symmetric matrix $D_{2s_i}$ and continuing on with other lines give the form of the block $D_{2s_i}$. The same applies to the arbitrary block $D_{2s_i,2s_j}$ and similar process works for $D_{2r,2s_i}$. 
\item[{\bf iv)}] The proof of this item is similar to item $(3)$. 
\end{itemize}
\end{proof}

The following comments are some the outcomes of our analysis.
\begin{remark} \label{remark-main-results-linear}
For a given linear vector field $X=Bu$  we have: 
\begin{itemize}
\item[1)] If the matrix $B$ has at least one pair of nonzero positive-negative eigenvalues, real or complex, then  $X$ has a Hamiltonian description with the quadratic Hamiltonian $H(u)=u^t D u$ w.r.t a big-isotropic structure.  Furthermore, if the matrix $B$ is non-singular, then the underlying structure is a presymplectic one.
\item[2)]  In the case of eigenvalue zero, presence of a three dimensional Jordan block guaranties a Hamiltonian description w.r.t. a big-isotropic structure. 
\item[3)] Let $d_\xi$ be one of the free variables of $D$, we define $D_\xi$ to be the matrix obtained from $D$ putting $d_\xi=1$ and all the other free variables equal to zero. Clearly,  the function $F_\xi=u^tD_\xi u$ is a constant of motion. The set $\{F_\xi\}_\xi$, where $\xi$ runs over all the free variables of $D$, contains all the first integrals of $X$ achievable by our algorithm. We will use this set of first integrals to show that the vector field $X$ is completely integrable in the sense  of Zung \cite{zung-action-angle}. 
\item[4)] The free variables of $D$ yield alternative Hamiltonian description for $X$.  
\end{itemize}
\end{remark}
We continue analysing the outcomes. 
\subsection*{Eigenvalue zero}
The following lemma proves the existence of a class of linear vector fields which can only have  Hamiltonian description  w.r.t. to a big-isotropic structure which is not Dirac. 
\begin{lemma}\label{only-proper-big-iso}
If there is no possibility for the matrix $\Dd(0)$, in item {\bf (i)} of Theorem~\ref{diagonal-form-D}, to be non-singular, then $\ker \Dd(0)\cap \ker \J(0)\neq 0$. It can be chosen non-singular if and only if it contains only  odd dimensional\footnote{Every non-degenerate eigenvalue is considered as a one dimensional Jordan block.} Jordan blocks $J_{2k+1}(0)$  or pairs of $(J_{2l},J_{2l})$. 
\end{lemma}
\begin{proof}
The first block, $D_r$, of $\Dd(0)$  is an arbitrary symmetric matrix, so it can be chosen to be non-singular. In the rest of the proof we always assume that $D_r$ is non-singular.   As we mentioned before, without losing generality, we may assume that $s_1\leq s_2\leq \ldots\leq s_k$. Our proof goes by induction on the number of degenerate Jordan blocks. For $k=1$ if  $J_{s_1}(0)$ is even dimensional, then the anti-diagonal and all the upper anti-diagonals of $D_{s_1}$ are zero -- see~\eqref{D4-D5} for the case of $s_1=4$. This means that $\Dd(0)$ is always singular. Furthermore, since its  $(r+1)^{\rm th}$ column is zero, we have
$$e_{r+1}\in\ker \Dd(0)\cap \ker \J(0)\neq 0.$$ For example, when $B$ has $r$ non-degenerate zero eigenvalues and a Jordan block of dimension $4$ associated to a degenerate eigenvalue zero, we have 
\[\Dd(0)=\begin{pmatrix}D_r&0&0&0&d_{1\times r}\\
0&0&0&0&0\\
0&0&0&0&d_{2}\\
0&0&0&-d_{2}&0\\
d^t_{1\times r}&0&d_{2}&0&d_{4}\\
\end{pmatrix}.\]
  If $s_1$ is  odd dimensional, its anti-diagonal can be chosen to be non-zero and the rest of its elements to be zero -- see~\eqref{D4-D5} for the case of $s_1=5$. This means  the matrix $D_{s_1}$ and, consequently, the matrix $\Dd(0)$ can be chosen to be non-singular. 
  
  Now assume that the statement of the lemma holds when $\J(0)$ has $k-1$ degenerate Jordan blocks. We consider two cases. 
\begin{itemize}
\item[$1)$] If $s_k$ is odd dimensional, we chose $D_{s_k}$ to be non-singular. Then write $\Dd(0)$ in the  form
\[\Dd(0)=\begin{pmatrix}
D_1&D_2\\D^t_2&D_{s_k}
\end{pmatrix}.\]
For the non-singular matrix $T=\begin{pmatrix}
I_{(s-s_k)}&\0\\-D^{-1}_{s_k}D_2^t&I_{s_k}
\end{pmatrix}$ we have
\begin{equation}\label{change-D}T^t\Dd(0) T=\begin{pmatrix}
D_1-D_2D^{-1}_{s_k}D_2^t&\0\\\0&D_{s_k}
\end{pmatrix}.\end{equation}
Now, we show that 
\begin{equation}\label{change-J}
T^{-1}\J(0) T=\J(0).
\end{equation}
The inverse of $T$ is $T^{-1}=\begin{pmatrix}
I_{(s-s_k)}&\0\\D^{-1}_{s_k}D_2^t&I_{s_k}
\end{pmatrix}$. 
So, considering $\J(0)$ in the form $\begin{pmatrix}
J_1&\0\\\0&J_{s_k}
\end{pmatrix},$ we only need to show that 
\begin{equation}\label{eq:1}D_{s_k}^{-1}D_2^{t}J_1-J_{s_k}D^{-1}_{s_k}D_2^t=0.\end{equation}
Since $\Dd(0)\J(0)$ is skew-symmetric  we have 
\[D_2^{t}J_1=-(D_2J_{s_k})^t\quad \mbox{and}\quad  J^t_{s_k}D_{s_k}+D_{s_k}J_{s_k}=0.\] 
Exploiting these two equalities  yields \eqref{eq:1}. 
Equations \eqref{change-D} and \eqref{change-J} together with Remark~\ref{normalizing-to-jordan} show that doing a change of variable by $T$ we can restrict ourself to the matrix $J_1$ which has $(k-1)$ degenerate Jordan blocks. This in turn proves Lemma by induction.
\item[2)] If $s_k$ is even, then $D_{s_k}$ is singular for sure. Now if $s_{k-1}\neq s_{k}$, then the column $s-s_k+1$ of $\Dd(0)$ is zero and $$e_{(s-s_k+1)}\in\ker \Dd(0)\cap \ker \J(0)\neq 0.$$

 For the case where $s_k=s_{k-1}$,  we write $\Dd(0)$ in the form
\[\Dd(0)=\begin{pmatrix}
D_1&D_2\\D^t_2&D_4
\end{pmatrix},\]
where $D_4=\begin{pmatrix}D_{s_{k}}&D_{s_{k},s_k}\\D^t_{s_{k},s_k}&D_{s_k}\end{pmatrix}$ and $D_{s_{k},s_k}=\sum_{l=1}^{s_k}d_l K_{s_{k}\times s_k}(l)$, as in item {\bf (i)}-$4$ of Theorem~\ref{diagonal-form-D}. The matrix $D_4$ can be chosen to be non-singular. Simply, set $D_{s_k}=\0$ and for the matrix $D_{s_{k},s_k}$ set ${d_l=0,\,\, \forall l\neq 1}$ and $d_1=1$.  Now, we can repeat a similar change of variable as in item $(1)$ and restrict our self to a matrix with $(k-2)$ Jordan blocks which proves Lemma by induction.   
\end{itemize}
 \end{proof}

\begin{corollary}\label{cor:only-proper-big-iso}
Let $B$ be a matrix which has a single even dimensional Jordan block $J_{2k}(0)$ i.e.,  any other Jordan blocks $J_s(0)$ of $B$ has dimension different than $2k$, then the  linear vector fields $X=Bu$ has Hamiltonian description only w.r.t. a big-isotropic structure which is not Dirac.
\end{corollary}
\begin{proof}
Since the fiber of any Dirac structure $L$ (of any type not only the ones discussed here) at a given point $u_0$ is of the form
\[L(u_0)=\{(B_0z,D_0^tz) |  \forall z\in\mathbb{R}^m\},\] 
for some constant matrices $B_0$ and $D_0$, applying Lemma~\ref{only-proper-big-iso} the structure can not be Dirac at $u_0$ which means  the structure is not Dirac. 
\end{proof}
 
The Hamiltonian description presented in~\cite{MR1233228}, repeated in~\cite[Theorem 4.2]{geo-from-dyn}, requires the zero eigenvalues to have even multiplicity.   We apply our method to the following example from~\cite{MR1233228} showing that our approach in addition to removing this requirement, detects casimirs as well.
\begin{example}\label{example:zero-eigenvalue}

Let $X=\tilde{B}u$ where $\tilde{B}=\diag(G,G,...,G)$ with 
$$G=\begin{pmatrix}
0&0&1&0\\0&0&0&1\\0&0&0&0\\0&0&0&0
\end{pmatrix}.$$
 It is shown in~\cite{MR1233228} that $X$ is Hamiltonian w.r.t. the symplectic structure
\[\Omega=\begin{pmatrix}
\0&\ldots&\ldots&\ldots&I_2\\
\ldots&\ldots&\ldots&-I_2&\0\\
\ldots&\ldots&I_2&\0&\ldots\\
\ldots&-I_2&\0&\ldots&\ldots\\
\ldots&\ldots&\ldots&\ldots&\ldots\\
\ldots&\ldots&\ldots&\ldots&\ldots\\
\end{pmatrix}\]
and  the Hamiltonian function defined by the symmetric matrix $\Omega \tilde{B}$. The matrix $\tilde{B}$ is not in the Jordan normal form. To do a comparison, we consider $\tilde{B}=G$ i.e., with only one block. The Hamiltonian given by $\Omega \tilde{B}$  is then $H_1(u)=-\frac{1}{2}(u_3^2+u_4^2)$. To apply our approach, we do the change of variable $u=Tv$ with $T=\left(
\begin{array}{cccc}
 0 & 0 & 1 & 0 \\
 1 & 0 & 0 & 0 \\
 0 & 0 & 0 & 1 \\
 0 & 1 & 0 & 0 \\
\end{array}
\right)$. Then
\[ X(v)= \underbrace{(T^{-1}.G.T)}_{=B}v=\left(\begin{array}{cccc}
 0 & 1 & 0 & 0 \\
 0 & 0 & 0 & 0 \\
 0 & 0 & 0 & 1 \\
 0 & 0 & 0 & 0 \\
\end{array}\right)v,\]
i.e., $B=\diag(J_2(0),J_2(0))$. By item {\bf (i)} of Theorem~\ref{diagonal-form-D} we have:
\[D=\begin{pmatrix}
0&0&0&d_{14}\\
0&d_{22}&-d_{14}&d_{24}\\
0&-d_{14}&0&0\\
d_{14}&d_{24}&0&d_{44}
\end{pmatrix}.
\]
If $d_{14}\neq 0$, then $D$ is non-singular and $X(v)$ is Hamiltonian w. r. t. the Poisson structure $BD^{-1}=\left(
\begin{array}{cccc}
 0 & 0 & -\frac{1}{d_{14}} & 0 \\
 0 & 0 & 0 & 0 \\
 \frac{1}{d_{14}} & 0 & 0 & 0 \\
 0 & 0 & 0 & 0 \\
\end{array}
\right)$ and the Hamiltonian function 
$$H(v)=\frac{1}{2}(v^tDv)=\frac{1}{2}(d_{22}v_2^2+d_{44}v_4^2+2d_{14}(v_1v_4-v_2v_3)).$$ 
Linear Casimirs given by the elements of  $\ker B$ are $v_2 (=u_4)$ and $v_4 (=u_3)$. So, the function $H_1(u)$ is actually a Casimir. In~\cite{MR1233228}, the vector field $\tilde{B}u$ is paired with a Casimir trough a symplectic structure. As mentioned there, this is only possible when zero eigenvalue has even multiplicity and its Jordan blocks have the same structure. Our approach works for any dimension and any type of Jordan blocks, detecting Casimirs. Note that the vector field $X$ is Hamiltonian w.r.t. the Poisson structure $BD^{-1}$ and the Hamiltonian $H^\prime(v)=d_{14}(v_1v_4-v_2v_3)$. 
\end{example}

\subsection*{Non-zero Eigenvalues}

If $B$ is non-singular, then the underlying structure for the Hamiltonian description  is the presymplectic structure $\omega^\sharp=D^tB^{-1}$. If the dimension of $B$ is odd, then it is not possible to have symplectic structure. In the case of even dimension, it is clear that if all eigenvalues of $B$ come in pairs $\pm\lambda_j$ or quadruples $\pm(a_j\pm b_ji)$ and their Jordan blocks have the same structure, then the underlying structure could be chosen to be symplectic. Moreover, it is well-known that  for a Hamiltonian vector field $X$, in the context of symplectic geometry,  the Jordan blocks belonging to a real or complex eigenvalue $\lambda$ have the same structure as the Jordan blocks belonging to $-\lambda$, see~\cite[page 453]{MR1233228}. The following example shows that our approach can handle vector fields that have Jordan blocks with different structures associated to a pair of positive negative eigenvalues. In addition, we will see that we are able to detect isotropic vector fields which in some sense are the analogues of Casimirs. 

\begin{example}\label{example:nonzero-eigenvalue}
Let 
\[X=x\frac{\partial}{\partial x}+y\frac{\partial}{\partial y}-\frac{1}{2}(w+z)\frac{\partial}{\partial z}+\frac{1}{2}(z-3w)\frac{\partial}{\partial w}.\]
Diagonalizing the representing matrix we have
\begin{equation}\label{not-symplectic}
X(v)=\left(
\begin{array}{cccc}
 1 & 0 & 0 & 0 \\
 0 & 1 & 0 & 0 \\
 0 & 0 & -1 & 1 \\
 0 & 0 & 0 & -1 \\
\end{array}
\right)v.
\end{equation}
As is mentioned in~~\cite{MR1233228}, since the Jordan blocks of the eigenvalues $\pm 1$ have different structures, the vector field $X$ can not have a Hamiltonian description with a symplectic structure as underlying structure. In our approach, item ${\bf (ii)}$ of Theorem~\ref{diagonal-form-D} provides 
\[D=\begin{pmatrix}
0&0&0&d_{14}\\
0&0&0&d_{24}\\
0&0&0&0\\
d_{14}&d_{24}&0&0
\end{pmatrix},\]
and the vector field  $X$ is Hamiltonian w.r.t. the presymplectic structure
$$DB^{-1}=\left(
\begin{array}{cccc}
 0 & 0 & 0 & -d_{14} \\
 0 & 0 & 0 & -d_{24} \\
 0 & 0 & 0 & 0 \\
 d_{14} & d_{24} & 0 & 0 \\
\end{array}
\right)$$
with the Hamiltonian function $H(u)=\frac{1}{2}u^tDu$. 
The kernel of $D$ is generated by $$\xi_1=\begin{pmatrix}0\\0\\1\\0\end{pmatrix},\quad \xi_2=\begin{pmatrix}
-\frac{d_{24}}{d_{14}}\\1\\0\\0
\end{pmatrix}\,\mbox{ i.e.,}$$
\[X_{\xi_1}=(0,0,-1,0)\quad\mbox{and}\quad X_{\xi_2}=(-\frac{d_{24}}{d_{14}},1),\]
are isotropic vector fields. This means that any linear combination ${X+c_1 X_{\xi_1}+c_2 X_{\xi_2}}$ is also Hamiltonian with the same Hamiltonian function $H$. 
\end{example}
\section{Integrability of Linear Systems}\label{sec:integrability}
In this Section, we discuss integrability of linear Hamiltonian vector fields. We provide a brief introduction to the concept of completely integrable systems in the sense of  Zung \cite{zung-action-angle}.  Following the terminology used by Zung, we omit  the word "completely". 

\begin{defn}\label{def:integrable-general}
Let $M$ be a $m$-dimensional manifold and $p\geq 1\,,q \geq 0$ such that $p+q = m$. A $m$ tuple  $(X_1,\ldots,X_p,F_1,\ldots,F_q)$, constituted of the vector fields  $X_i,\,i=1,\ldots,p$ and the smooth functions $F_j,\,j=1,\ldots,q$, is called {\it  an integrable system} of type $(p, q)$ on $M$ if it satisfies the following conditions:
\begin{itemize}
\item[(i)] $[X_i,X_j] = 0\quad\forall i,j = 1,...,p,$
\item[(ii)] $X_i(F_j)=0\quad \forall i\leq p,\, j\leq q,$
\item[(iii)] $X_1\wedge\ldots\wedge X_p\neq 0$ and $dF_1\wedge\ldots\wedge dF_q\neq 0$ almost everywhere on $M.$
\end{itemize}
A  vector field $X$ on a manifold $M$ is called  integrable if there exists an integrable system $(X_1,\ldots,X_p,F_1,\ldots,F_q)$ of some type $(p,q)$ on $M$ with $X_1 = X$.
\end{defn}

 The system defined above is called {\it regular } on a level set $N$ of first integrals, i.e., a level set of the map $(F_1,\ldots, F_q):M\to\mathbb{R}^q$, if  condition $(iii)$ holds everywhere on $N$.  An integrable system in the sense of Definition~\ref{def:integrable-general}  has {\it action-angle variables} (also known as {\it Liouville system of coordinates}) around any compact regular level set $N$, see \cite[Theorem 2.1]{zung-action-angle}. 

Definition~\ref{def:integrable-general} only considers commuting flows and first integrals without any underlying geometric structure, for that reason it is also called {\it non-Hamiltonian integrability}. The following definition considers an underlying geometric structure as well. 

\begin{defn}\label{def:integrable-hamiltonian}
An  integrable system $(X_1,\ldots,X_p,F_1,\ldots,F_q)$ is called {\it Hamiltonian integrable} on a manifold $M$ equipped with a big-isotropic  structures $L$ if there exist $H_1,\ldots,H_q\in C^\infty$ such that  $(X_i,d H_i)\in L,\,\, i=1,\ldots,q.$ 
A vector field $X$  is called {\it Hamiltonian integrable} if there exists a Hamiltonian integrable system $(X_1,\ldots,X_p,F_1,\ldots, F_q)$ of some type $(p, q)$ with $X_1=X$.
\end{defn}
Existence of action-angle variables for a Hamiltonian integrable system  having a presymplectic, Poisson or Dirac structure as underlying geometry is  proven in \cite{zung-action-angle}.  We are not aware of any work on the action-angle variable in the case of a big-isotropic structure which is not Dirac, but we believe that the results of \cite{zung-action-angle} can easily be extended for big-isotropic structures which are not Dirac. We now state and prove the main result of this section. 
\begin{theorem}\label{thm:integrability-main}
Every linear Hamiltonian system, $X=Bu$, is Hamiltonian integrable in the sense of Definition~\ref{def:integrable-hamiltonian}.
\end{theorem}
\begin{proof}
 As in Section~\ref{sec:linear-vector-fields}, by virtue of Remark~\ref{normalizing-to-jordan}, we consider the matrix $B$ in the form~\eqref{B-in-jordan-form}. The proof is  divided in several parts and  each part deals with an specific diagonal block of $B$.   It is clear that proving the theorem for each diagonal block separately would yield the proof for $B$. In each part we proceed in the following way:  

 Assume that the dimension of the diagonal block we are dealing with is $m^\prime$.  As mentioned in item $3$ of Remark~\ref{remark-main-results-linear}, every free variable $d_\xi$ in the matrix $D$, obtained by Theorem~\ref{diagonal-form-D}, yields a first integral.  We will pick $q^\prime$ of these integrals which are independent almost everywhere. These first integrals will be denoted  by $F_{\xi_j}=u^tD_{\xi_j}u,\, j=1,\ldots,q^\prime$ where $D_{\xi_j}$ is the matrix obtained from the  matrix $D$ putting $d_{\xi_j}=1$ and all the  other free variables equal to zero.  
 
 Then we introduce the  $p^\prime=m^\prime-q^\prime$ commuting vector fields $X_i$ that preserve these first integrals. We need the first vector field $X_1$ to be the vector field at hand therefore, we set $C_1$ the block part we are dealing with and $X_1=C_1u$. The rest of the commuting flows will be linear as well i.e., we present matrices $C_2,\ldots,C_{p^\prime}$ such that $X_i=C_iu,\,i=2,...,p^\prime$. The introduced matrices satisfy the following list of conditions
 \begin{lst}
 \label{list-1}
\begin{itemize}
\item[(1)] $X_1\wedge\ldots\wedge X_p\neq 0$ almost everywhere on $\mathbb{R}^m,$ 
\item[(2)]\label{item:condition2} $[C_i,C_j]=C_jC_i-C_iC_j=0,\quad\forall i,j=1,\ldots,p$,
\item[(3)] $D_{\xi_j}C_i$ is skew-symmetric for every $i=1,\ldots,p$ and $j=1,..,q$.
\end{itemize}
\end{lst}
Consequently, the vector field  $X_1$ is Hamiltonian  integrable w.r.t. a big-isotropic structure which will be described in each part.  
Note that 
\begin{itemize}
\item[(1)] $[X_i,X_j]=0$ is equivalent to ${[C_i,C_j]=0}$. This yields the condition $(i)$ of Definition~\ref{def:integrable-general}. 
\item[(2)] The matrix $D_{\xi_j}C_i$ being skew-symmetric means $X_i(F_j)=0$ which is the condition $(ii)$ of the same definition. 
\item[(3)] The condition $(iii)$ of Definition~\ref{def:integrable-general} holds due to the way we picked the first integrals and item $(1)$ above. 
\end{itemize}
So, $(X_1,..,X_p,F_1,..,F_q)$ is an integrable system in the sense of Definition~\ref{def:integrable-general}.  In each part, we will also show that $(X_1,..,X_p,F_1,..,F_q)$ is Hamiltonian integrable, according to Definition~\ref{def:integrable-hamiltonian}, w.r.t.  a big-isotropic structure. 
The matrix $C_1$ will be written as a linear combination of the shift matrix $H_m$, introduced in~\eqref{H-m}, and the identity matrix. Higher potentials of $H_m$ will be used to construct $C_2,\ldots,C_{p^\prime}$.  In the complex case we will use the complex version of $H_m$ which has, in its upper sub-diagonal, $I_2$ instead of $1$. 
 
Let rewrite the matrix $B$ as $B=\diag(\J(0),\J_{\ast})$ where
 \[\J_{\ast}=(\J(\pm \lambda_1), \ldots,\J(\pm \theta_1i),\ldots,\J(\pm(a_1\pm b_1i)),\ldots).\]
 We start with the generic case of 
 $X_\ast$.
 
\subsection*{The case of $X_\ast$:}
We divide this block itself into three types of sub-blocks i.e., $\J(\pm\lambda)$, $\J(\pm(a\pm bi)$ and $\J(\pm \theta i)$.  We begin with 
$$\J(\pm\lambda)=\scalemath{.9}{\diag(\J(\lambda),\J(-\lambda))=\diag(\lambda I_{r^+},J_{s^+_1}(\lambda),\ldots,J_{s^+_k}(\lambda),-\lambda I_{r^-},J_{s^-_1}(-\lambda),\ldots,J_{s^-_l}(-\lambda))},$$
where $\lambda$ is a nonzero real eigenvalue of $B$. 
Without any loss of generality, we assume that $r^++k\leq r^-+l$.  Considering non-degenerate eigenvalues as one dimensional Jordan blocks we pair each diagonal block of  $\J(\lambda)$ with a block of $\J(-\lambda)$. One gets the maximum number of integrals if this pairing is done in a way that minimises the sum of the differences between dimensions of the pairs. The extra blocks of eigenvalue $-\lambda$ are left alone. 
 The free elements of $D$ to generate our integrals are chosen in the following manner. For every pair $(J_\xi(\lambda),J_{\chi}(-\lambda))$
 \begin{itemize}
 \item[-] if $\dim(J_\xi(\lambda))=\dim(J_{\chi}(-\lambda))=1$ i.e. both blocks are non-degenerate eigenvalues, we pick the free element associated to them in $D_{r^+,r^-}$,
 \item[-] if $\dim(J_\xi(\lambda))=1$ and $\dim(J_{\chi}(-\lambda))=s^-_j\neq1$, we pick the associated free element to the non-degenerate eigenvalue $J_\xi(\lambda)$
 in the last column of $D_{r^+s_j^-}$, 
 \item[-] Similarly, $\dim(J_\xi(\lambda))=s^+_i\neq1$ and $\dim(J_{\chi}(-\lambda))=1$, we pick the associated free element in the last column of $D_{r^-s^+_i}$,
 \item[-] if  $\dim(J_\xi(\lambda))=s^+_i\neq 1$ and  $\dim(J_\chi(\lambda))=s^-_j\neq 1$, we pick all the free elements of $D_{s^+_i,s^-_j}$.
 \end{itemize}
  The form of the matrix $\Dd(0)$ shows that the chosen integrals are independent. Furthermore, the number of integrals associated to any pair $(J_\xi(\lambda),J_{\chi}(-\lambda))$ is 
  \begin{equation}\label{q-for-pairs}
  q_{\xi,\chi}=\min\{\dim(J_\xi(\lambda)),\dim(J_{\chi}(-\lambda))\}.
  \end{equation}
  For the extra blocks of the eigenvalue $-\lambda$ we do not associate any first integral. Therefore, Equality \eqref{q-for-pairs} holds for these left alone blocks as well. 
  
  Now, we introduce the commuting flows i.e., the matrices $C_1,...., C_p$.  We do so, for a pair $(J_\xi(\lambda),J_{\chi}(-\lambda))$ where $\dim(J_\xi(\lambda))=s^+_i\neq 1$ and  $\dim(J_\chi(\lambda))=s^-_j\neq 1$. The other cases, including the left alone blocks of the eigenvalue $-\lambda$,  are similar with the only difference that the shift matrix $H_m$, which will use in a moment, will be replaced by $\begin{pmatrix}\0& H_m\end{pmatrix}$ or $\begin{pmatrix}\0\\H_m\end{pmatrix}$. 
  
  Without loss of generality, we assume that $s^+_i\leq s^-_j$ i.e., $q_{\xi,\chi}=s^+_i$. The integral $(F_{s^+_i,s^-_j})_r=u^t(D_{s^+_i,s^-_j})_r u,\,\,r=1,\ldots,s^+_i$ are given by the free variables of $D_{s^+_i,s^-_j}$ defined in item {\bf (ii)}-$3$ of Theorem~\ref{diagonal-form-D}.  Let $c_0\ldots,c_{s^-_j-1}\in\mathbb{R}$ be arbitrary numbers. We define $C:=\diag (T_{s^+_i},T_{s^-_j})$  where 
  $$T_{s^+_i}=\sum_{e=0}^{s^+_i-1}c_{e}(H_{s^+_i})^e\quad \mbox{and}\quad T_{s^-_j}=\sum_{e=0}^{s^-_j-1}(-1)^{e+1} c_e(H_{s^-_j})^e.$$
 Now, let $E_l,\, l=0,..,s^-_j-1$  be the matrix obtained from $C$  setting $c_l=1$ and the rest of the coefficients  equal to zero. Clearly, we have $C_1=\lambda E_0+E_1.$ We set 
 \[ C_2:=E_1\,,....,\,C_{s^-_j}:=E_{s^-_j-1}.\]
 These commuting matrices  yield commuting independent  flows. Furthermore,  for every $l=1,\ldots,s^-_j$ and $r=1,..,s^+_i$, the matrix   $(D_{s^+_i,s^-_j})_rC_l$ is skew-symmetric. We have fulfilled all the conditions of List~\ref{list-1} therefore, the pair $(J_{s^+_i}(\lambda),J_{s^-_j}(-\lambda))$ is integrable of type $(s^-_j, s^+_i)$. As an example for $s^+_i=2$ and $s^-_j=3$ we have 
 \[D_{s^+_i,s^-_j}=\begin{pmatrix}0&0&0&0&d_1\\0&0&0&-d_1&d_2\\0&0&0&0&0\\0&-d_1&0&0&0\\d_1&d_2&0&0&0\end{pmatrix}\quad
  C=\begin{pmatrix}c_0&c_1&0&0&0\\0&c_0&0&0&0\\0&0&-c_0&c_1&-c_2\\0&0&0&-c_0&c_1\\0&0&0&0&-c_0\end{pmatrix}.\]
  
  The big-isotropic structure, in this case, is the graph of the presymplectic structure  $$\omega^\sharp=D^\prime(\J(\pm\lambda))^{-1}$$ where ${D^\prime=\sum_{r=1}^{q^\prime} (D_{s^+_i,s^-_j})_r}$. The calculation 
  \begin{align*}
  (D^\prime (\J(\pm\lambda))^{-1} C_l)^t&=C_l^t(D^\prime (\J(\pm\lambda))^{-1})^t=-C_l^t D^\prime (\J(\pm\lambda))^{-1}\\
  &=(-1)^2D^\prime C_l (\J(\pm\lambda))^{-1}=D^\prime (\J(\pm\lambda))^{-1}C_l,
  \end{align*}
where we used the facts that $D^\prime (\J(\pm\lambda))^{-1}$ and $(D_{s^+_i,s^-_j})_rC_l$ are skew-symmetric and ${[B,C_i]=0\Leftrightarrow [B^{-1},C_i]=0},$ shows that the matrix $D^\prime (\J(\pm\lambda))^{-1} C_i$ is symmetric. For every $l=1,\ldots,p^\prime$, we define the Hamiltonian $H_l$ by 
$$d_uH_i=D^\prime (\J(\pm\lambda))^{-1}C_lu=\omega^\sharp(X_l).$$
  Consequently, the system  $(X_1,..,X_{p^\prime},F_1,..,F_{q^\prime})$ is Hamiltonian integrable, according to Definition~\ref{def:integrable-hamiltonian}, w.r.t. the presymplectic structure $\omega$.

  The proof for a block of the form $\J(\pm(a\pm bi)$ is exactly the same as for the block $\J(\pm\lambda)$ with the only difference that we treat the $(2\times 2)$ blocks $B_{(a+bi)}$ like  numbers. Note that the matrices of  type $B_{(a+bi)}$ commute with each other.   The last case  is a block of the form 
 \[\J(\pm \theta i)=\diag(\oplus^rB_{(\pm \theta i)},J_{2s_1}(\pm \theta i),\ldots,J_{2s_k}(\pm \theta i)).\] 
For simplicity we assume that $k=1$, the case of $k\geq 2$ is similar. So, we set  $\J(\pm \theta i)=\diag(\oplus^rB_{(\pm \theta i)},J_{2s_1}(\pm \theta i).$
The matrix $\Dd(\pm\theta i)$ obtained by item {\bf (iii)} of Theorem~\ref{diagonal-form-D} has blocks of four types. For the first integrals in this case we do not use the types introduced in items  {\bf(iii)}-$2$ and {\bf (iii)}-$4$. From item {\bf (iii)}-$1$ we only choose the free variables $d_{11},\ldots,d_{rr}$ and all the free variables of item {\bf (iii)}-$3$. This way we obtain, in total, $q^\prime=r+s_1$ independent integrals. 
 Let $c_{11}\ldots,c_{rr}, \delta_{0},\ldots,\delta_{s_1-1}$, be arbitrary numbers. We define $$C:=\diag (C_{2r}, C_{2s_1})$$ where 
\begin{align*}
C_{2r}=\diag(C_{(\pm c_{11}i)},\ldots,C_{(\pm c_{rr}i)}),\quad C_{(\pm c i)}=\begin{pmatrix}0&c\\-c&0\end{pmatrix}
 \end{align*}
 and $C_{2s_1}=C_0+C_1$ where 
 \begin{align*}
&C_0=\sum_{l=0}^{k_0}\alpha_{2l+1}(H^c_{2s_j})^{(2l+1)},\\
&C_1=\sum_{l=0}^{k_1}D_{(\pm\delta_{2l}i)}(H^c_{2s_j})^{2l},
\end{align*}

 Similar to the case of $\J(\pm\lambda)$, let $E_z, z=1,\ldots,(r+k^\prime_1+1)$ be the matrices obtained from $C$ setting one coefficient  equal to one and the rest equal to zero.  The rest of the proof follows the same line of reasoning as in the case of $\J(\pm\lambda)$ and we get a Hamiltonian integrable system w.r.t. a big-isotropic structure which is the graph of a presymplectic form. To illustrate the situation, for the cases $s_1=4$ and $s_1=5$ the matrices $D_8$ and $D_{10}$ are the ones given in~\eqref{D8-D10-case(iii)}  and 
$$\scalemath{.7}{C_{8}=\begin{pmatrix}
C_{(\pm\delta_0i)}&\delta_1 I_2&C_{(\pm\delta_2 i)}&\delta_3 I_2\\
\0&C_{(\pm\delta_0 i)}&\delta_1 I_2&C_{(\pm\delta_2 i)}\\
\0&\0&C_{(\pm\delta_0 i)}&\delta_1 I_2\\
\0&\0&\0&C_{(\pm\delta_0 i)}
\end{pmatrix}},\quad\scalemath{.7}{C_{10}=\begin{pmatrix}
C_{(\pm\delta_0i)}&\delta_1 I_2&C_{(\pm\delta_2i)}&\delta_3 I_2&C_{(\pm\delta_4i)}\\
\0&C_{(\pm\delta_0i)}&\delta_1 I_2&C_{(\pm\delta_2i)}&\delta_3 I_2\\
\0&\0&C_{(\pm\delta_0i)}&\delta_1 I_2&C_{(\pm\delta_2i)}\\
\0&\0&\0&C_{(\pm\delta_0i)}&\delta_1 I_2\\
\0&\0&\0&\0&C_{(\pm\delta_0i)}
\end{pmatrix},\,.}$$

\subsection*{The case of $X_0=\J(0)u$:} We divide this block itself into four types of sub-blocks i.e., $\0_{r\times r}$, $J_{2k+1}(0)$,  pairs of even dimensional Jordan blocks $\diag(J_{2k}(0),J_{2k}(0))$ and single even dimensional blocks $J_{2k}(0)$. 
The block $\0_{r\times r}$ generates no dynamics.  We set the underlying big-isotropic structure to be the trivial structure $L_{(\0,\0)}$.  This way, we get $r$ independent Casimirs.  For the rest of the proof, note that any Jordan block $J_s(0)$ is actually the nilpotent shift matrix $H_s$. 

 For the odd dimensional block $J_{2k+1}(0)=H_{2k+1}$ the first integrals are defined by
  \begin{align*}
  &d_u F_1=(K_{2k+1}(1))u,\\
  &d_u F_2=(K_{2k+1}(1))H_{2k+1}^2u,\\
  &\vdots\\
 &d_u F_{k_i}=(K_{2k+1}(1))H_{2k+1}^{(2k_i-2)}u,\\
& d_u F_{(k_i+1)}=(K_{2k+1}(1))H_{2k+1}^{2k_i}u=u_{2k+1},
  \end{align*}
where  
$$\scalemath{.8}{K_{2k+1}(1)=\begin{pmatrix}0&0&\ldots&0&1\\0&0&\ldots&-1&0\\\vdots&\vdots&\iddots&\vdots&\vdots\\0&-1&\ldots&0&0\\1&0&\ldots&0&0\end{pmatrix}}.$$
 The commuting vector fields are
 \[X_1=H_{2k+1}u, X_2=H_{2k+1}^3u,...,X_{k_i}=H_{2k+1}^{(2k_i-1)}u.\]
It is very easy to check that $(X_1,...,X_{k_i},F_1,...,F_{k_i+1})$ is an integrable system in the sense of Definition~\ref{def:integrable-general}. Furthermore, 
 $(X_j,dF_j)\in L_{(H_{2k+1},K_{2k+1})}$ for ${j=1,...,k_i}$ and $(0,dF_{k+1})\in L_{(H_{2k+1},K_{2k+1})}$ i.e., this system is Hamiltonian integrable w.r.t. $L_{(H_{2k+1},K_{2k+1})}$. Since $K_{2k+1}$ is non-singular, the big-isotropic  structure $L_{(H_{2k+1},K_{2k+1})}$ is the graph of the Poisson structure 
 $\pi^\sharp= H_{2k+1}(K_{2k+1})^{-1}$.  In this setting, the function $F_{k_i+1}=\frac{1}{2}u^2_{2k+1}$ is a Casimir. 
 
 In the case of a pair of even dimensional Jordan blocks $$\diag(J_{2k}(0),J_{2k}(0))=\diag(H_{2k},H_{2k}),$$ the functions $$F_{2k}=u_{2k}, \quad F_{2k+1}=u_{4k},$$ are two Casimirs and  the rest of the first integrals are defined by
  \begin{align*}
  &d_u F_1=Wu,\\
  &d_u F_2=W\diag((H_{2k})^2,\0)u,\\
  &\vdots\\
  &d_u F_{k-1}=W\diag((H_{2k})^{(2k-2)},\0)u,\\
 &d_u F_{2k+1}=W\diag(\0,(H_{2k})^2)u,\\
 &\vdots\\
 &d_u F_{2k-1}=W\diag(\0,(H_{2k})^{(2k-2)})u,
 \end{align*}
 where
  $$W=\begin{pmatrix}\0&K_{2s}(1)\\(K_{2s}(1))^t&\0\end{pmatrix}.$$
  The commuting vector fields are
 \begin{align*}
 &X_1=\diag(H_{2k}, H_{2k})u,\, X_2=\diag((H_{2k})^3,H_{2k})u,\,\ldots, \, X_{k}=\diag((H_{2k})^{(2k-1)},H_{2k})u,\\
  &X_{k+1}=\diag(H_{2k},(H_{2k})^3)u,\, X_{(k+2)}=\diag(H_{2k},(H_{2k})^5)u,\ldots,\,X_{(2k-1)}=\diag(H_{2k},(H_{2k})^{(2k-1)})u.
  \end{align*}
  The underlying big-isotropic structure here is $L_{(\diag(H_{2k}, H_{2k}),W)}$ which is the graph of the Poisson structure $\pi^\sharp=(\diag(H_{2k}, H_{2k}))W^{-1}$.
  
Finally, for a single even dimensional Jordan block $J_{2k}(0)=H_{2k}$, the first integrals are defined by 
 \begin{align*}
 &d_u F_1=(K_{2k}(2))u,\,\,d_u F_2=(K_{2k}(2))(H_{2k})^2u,\ldots,\,F_{k}=(K_{2k}(2))(H_{2k})^{(2k-2)}u, \end{align*}
where 
\begin{equation}
 \scalemath{.8}{K_{2k}(2)=\begin{pmatrix}0&0&\ldots&0&0\\0&0&\ldots&0&1\\\vdots&\vdots&\iddots&-1&\vdots\\0&0&-1^{\iddots}&0&0\\0&1&\ldots&0&0\end{pmatrix}}.
 \end{equation}
  The commuting vector fields are
 \[X_1=H_{2k}u, X_2=(H_{2k})^3u,...,X_{k}=(H_{2k})^{(2s-1)}u.\]
  The underlying big-isotropic structure here is $L_{(H_{2k},K_{2k}(2))}$ which is not a Dirac structure. 
\end{proof}

As mentioned in the proof, for the general vector field $X=Bu$ where 
\begin{equation*}
 \scalemath{.9}{
B={\rm diag}(\J(0),\J(\lambda_1),\J(-\lambda_1), \ldots\ldots,\J(\pm \theta_1i),\J(a_1\pm b_1i),\J(-(a_1\pm b_1i)),\ldots),}
\end{equation*}
we put the first integrals, the commuting flows and the big-isotropic structures together in a diagonal form. For commuting flows we get vector fields of type $X=Cu$ where 
  \begin{equation}\label{matrix-C}
   \scalemath{.9}{
C=\diag(\Cc(0),\Cc(\lambda_1),\Cc(-\lambda_1),\ldots,\Cc(\theta_1 i),\Cc(-\theta_1 i),\ldots,\ldots,\Cc((a_1\pm b_1i)),\Cc(-(a_1\pm b_1i)),\ldots),}
\end{equation}
where the block diagonals are of a specific type $\sum_{l=0}^{s-1} c_l H^i_{s}$. In the literature, this type of matrices  are referred to as \emph{upper triangular Toeplitz matrices}. In fact, a matrix $C$ commutes with $B$ if and only if it is of the form~\eqref{matrix-C}, see \cite[Theorem $9.1.1$]{MR2228089}.

\section*{Acknowledgements}

We are grateful to the referees for their constructive input which substantially helped to improve the quality of the paper. 
 
%%%%%%%%%%%%%%%%%%%%%%%%%%%%%%%%%%%%%%%%%%%%%%%%%%%%%%%%%%%%%%%%%%%%%%%%%%%%
%A-B-C-D-E-F-G-H-I-J-K-L-M-N-O-P-Q-R-S-T-U-V-W-X-Y-Z

\begin{bibdiv}
\begin{biblist}

\bib{hassan-2020-2}{article}{
author = {Hassan Najafi Alishah},
title = {Conservative replicator and Lotka-Volterra equations in the context of Dirac$\backslash$big-isotropic structures},
journal = {Journal of Geometric Mechanics},
volume = {12},
number = {2},
pages = {0},
date= {2020},
issn = {1941-4889},
doi = {10.3934/jgm.2020008},
url = {http://aimsciences.org//article/id/7c600264-e481-4a83-95f1-315e570b0d02}
}

\bib{AD2014}{article}{
      author={Alishah, Hassan~Najafi},
      author={Duarte, Pedro},
       title={Hamiltonian evolutionary games},
        date={2015},
        ISSN={2164-6066},
     journal={J. Dyn. Games},
      volume={2},
      number={1},
       pages={33\ndash 49},
         url={http://dx.doi.org/10.3934/jdg.2015.2.33},
      review={\MR{3370936}},
}

\bib{geo-from-dyn}{book}{
      author={Cari\~nena, Jos\'e F.},
     author={Ibort, Alberto },
     author={ Marmo, Giuseppe},
      author={Morandi, Giuseppe },
    title={Geometry from Dynamics, Classical and Quantum},
   series={},
   publisher={Springer Dordrecht Heidelberg New York London},
   date={2015},
   pages={xvv+719},
   isbn={978-94-017-9219-6},
  doi={10.1007/978-94-017-9220-2}
}

\bib{MR998124}{article}{
   author={Courant, Theodore James},
   title={Dirac manifolds},
   journal={Trans. Amer. Math. Soc.},
   volume={319},
   date={1990},
   number={2},
   pages={631--661},
   issn={0002-9947},
   review={\MR{998124}},
   doi={10.2307/2001258},
}

\bib{MR951168}{article}{
   author={Courant, Ted},
   author={Weinstein, Alan},
   title={Beyond Poisson  structures},
   conference={
      title={Action hamiltoniennes de groupes. Troisi\`eme th\'eor\`eme de Lie},
      address={Lyon},
      date={1986},
   },
   book={
      series={Travaux en Cours},
      volume={27},
      publisher={Hermann, Paris},
   },
   date={1988},
   pages={39--49},
   review={\MR{951168}},
}

\bib{MR2422350}{article}{
   author={Dufour, Jean-Paul},
   author={Wade, A\"{\i}ssa},
   title={On the local structure of Dirac manifolds},
   journal={Compos. Math.},
   volume={144},
   date={2008},
   number={3},
   pages={774--786},
   issn={0010-437X},
   review={\MR{2422350}},
   doi={10.1112/S0010437X07003272},
}		

\bib{MR2228089}{book}{
   author={Gohberg, Israel},
   author={Lancaster, Peter},
   author={Rodman, Leiba},
   title={Invariant subspaces of matrices with applications},
   series={Classics in Applied Mathematics},
   volume={51},
   note={Reprint of the 1986 original},
   publisher={Society for Industrial and Applied Mathematics (SIAM),
   Philadelphia, PA},
   date={2006},
   pages={15-01},
   isbn={0-89871-608-X},
   review={\MR{2228089}},
   doi={10.1137/1.9780898719093},
}

\bib{MR1233228}{article}{
   author={Giordano, M.},
   author={Marmo, G.},
   author={Rubano, C.},
   title={The inverse problem in Hamiltonian formalism: integrability of
   linear Hamiltonian fields},
   journal={Inverse Problems},
   volume={9},
   date={1993},
   number={4},
   pages={443--467},
   issn={0266-5611},
   review={\MR{1233228}},
}

\bib{MR2343378}{article}{
   author={Vaisman, Izu},
   title={Isotropic subbundles of $TM\oplus T^*M$},
   journal={Int. J. Geom. Methods Mod. Phys.},
   volume={4},
   date={2007},
   number={3},
   pages={487--516},
   issn={0219-8878},
   review={\MR{2343378}},
   doi={10.1142/S0219887807002156},
}
		
\bib{MR2349409}{article}{
   author={Vaisman, Izu},
   title={Weak-Hamiltonian dynamical systems},
   journal={J. Math. Phys.},
   volume={48},
   date={2007},
   number={8},
   pages={082903, 13},
   issn={0022-2488},
   review={\MR{2349409}},
   doi={10.1063/1.2769145},
}

	\bib{Williamson}{article}{
   author={Williamson, John},
   title={ On the Algebraic Problem Concerning the Normal Form of Linear Dynamical Systems},
   journal={American Journal of Mathematics},
   volume={58},
   date={1936},
   number={1},
   pages={141--163},
}	

\bib{zung-action-angle}{article}{
	author = {Zung, Nguyen Tien},
	title = {A Conceptual Approach to the Problem of Action-Angle Variables},
	journal = {Archive for Rational Mechanics and Analysis},
	volume = {229},
	date={2018},
         number = {2},
	pages = {789--833},
	doi={10.1007/s00205-018-1227-3},
	}

\end{biblist}
\end{bibdiv}

\end{document}